\documentclass[a4paper, 12pt]{article}
\usepackage{latexsym}
\usepackage{comment}
\usepackage{amsfonts, amssymb,  mathtools, enumerate, xcolor}
\usepackage{geometry}
\usepackage{amsmath, amsfonts}
\usepackage{amsthm}
\usepackage{placeins}
\usepackage{setspace}
\usepackage{graphicx}
\usepackage{tabularx}
\usepackage{mathrsfs}
\numberwithin{equation}{section}
\usepackage{cite}
\usepackage{epstopdf}
\usepackage{csquotes}
\usepackage{tikz}
\usepackage{caption}
\usepackage{subfig}
\usepackage{parskip}
\usepackage[colorlinks=true, linkcolor=red, citecolor=blue, filecolor=blue, urlcolor=blue]{hyperref}
\newtheorem{theorem}{Theorem}[section]
\newtheorem{remark}{Remark}[section]
\newtheorem{proposition}{Proposition}[section]
\newtheorem{lemma}{Lemma}[section]
\newtheorem{definition}{Definition}[section]
\newtheorem{corollary}{Corollary}[section]
\title{ Asymptotic Behavior and Error Bounds for Fisher-KPP Equations on the Real Half-Line}
\author{Chu Chu and M. W. Wong}
\date{Department of Mathematics and Statistics\\ York University\\4700 Keele Street\\ Toronto, Ontario M3J 1P3\\ Canada\\
chuchu7@my.yorku.ca, mwwong@yorku.ca}
\DeclareMathOperator{\sech}{sech}
\begin{document}

\maketitle

\textbf{Abstract}:
We study the Fisher-KPP equation on the half-line under Dirichlet,
Neumann, and Robin boundary conditions. For the autonomous logistic equation, we determine explicitly the nonnegative stationary profiles converging to $1$ at infinity and obtain sharp exponential boundary-layer asymptotics. We prove local uniform convergence of nontrivial Neumann solutions to~$1$, and we record both a conditional local Neumann-Robin asymptotic decomposition and a quantitative exponential comparison for initial data close to the corresponding stationary profiles. We also describe the dependence of the positive stationary profile on a general homogeneous Robin coefficient. Finally, for small time-periodic Neumann and Robin boundary forcing, we construct a periodic lifted mild solution that is unique in a fixed neighbourhood of the unforced steady state, derive a first-order expansion with a uniform $O(\varepsilon^2)$ remainder in
$C_0([0,\infty))$, and prove nonlinear exponential attraction of the periodic response.

\section{Introduction} 
 % \begin{definition}[semilinear]
  %it is linear in the highest-order derivatives.
%  \end{definition}

The initial-boundary value problem for the semilinear heat equation with the Dirichlet condition on $[0,\infty)$ is of the form 
\begin{equation}\label{Dirichlet}\left\{\begin{array}{lll}\frac{\partial u}{\partial t}(x,t)-\frac{\partial^2u}{\partial x^2} (x,t)=f(u(x,t)), \quad x>0,\quad t>0,\\u(0,t)=0, \quad t\textgreater 0,\\
u(x,0)=\phi(x),\quad 0\leq x < \infty.\end{array}\right.\end{equation}

The initial-boundary problem for the semilinear heat equation with the Neumann condition on $[0, \infty)$ is of the form
\begin{equation}\label{Neumann}\left\{\begin{array}{lll}
\frac{\partial u}{\partial t}(x,t)-\frac{\partial^2u}{\partial x^2}(x,t)=f(u(x,t)),\quad x>0,\quad t>0,\\
u_+'(0,t)=0,\quad t>0,\\
u(x,0)=\phi(x),\quad 0\leq x<\infty,\end{array}\right.\end{equation}
where $$u_+'(0,t)=\lim_{x\rightarrow 0^+}\frac{u(x,t)-u(0,t)}{x}$$
for all $t>0.$
Finally, the initial-boundary value problem for the semilinear heat  equation with the Robin boundary condition on  $[0, \infty)$ is of the form
\begin{equation}\label{Robin}\left\{\begin{array}{lll}
\frac{\partial u}{\partial t}(x,t)-\frac{\partial^2u}{\partial x^2}(x,t)=f(u(x,t)),\quad x>0,\quad t>0,\\
u_+'(0,t)= u(0,t),\quad t>0,\\
u(x,0)=\phi(x), \quad 0\leq x< \infty.\end{array}\right.\end{equation}

The Fisher--KPP equation goes back to the classical works of Fisher \cite{AK} and Kolmogorov--Petrovsky--Piskunov \cite{BK}; see also Aronson--Weinberger \cite{FK} for classical results on spreading phenomena. In the autonomous part of this paper we use the logistic Fisher--KPP nonlinearity
$$
f(u)=u(1-u),\qquad 0\le u\le 1.
$$
More generally, the standard KPP assumptions are
$$
f(0)=f(1)=0,\qquad f'(1)\le 0\le f'(0),
$$
$$
f'(u)\le f'(0),\qquad u\in(0,1),
$$
and
$$
f(u)>0,\qquad u\in(0,1).
$$

The time-periodic Dirichlet problem studied in~[2] is the genuinely
nonautonomous equation
$$
u_t=u_{xx}+f(t,u),
\qquad
f(t+T,u)=f(t,u),
$$
and is distinct from the autonomous logistic equation considered
below. Accordingly, we do not use \cite{CXC} to infer a nonconstant
time-periodic solution of the autonomous homogeneous Dirichlet
problem. In the autonomous analysis, the Dirichlet condition enters
only through the stationary profile $V_D$. The autonomous homogeneous Dirichlet problem considered here is covered,
on the invariant order interval $0\leq u\leq 1$, by Yi and
Zhao~\cite[Section~6.3, especially Corollary~6.3]{YZ}.
To match the nonnegative-range hypothesis imposed there on the birth
function, define
$$
\widetilde f(u)
:=
\begin{cases}
2u-u^2, & 0\leq u\leq 1,\\
1,      & u\geq 1.
\end{cases}
$$
Then
$$
\widetilde f\in C^1(\mathbb R_+,\mathbb R_+),
\qquad
\widetilde f'(0)=2,
$$
and $u^*=1$ is its unique positive fixed point. Setting
$\tau=0$, $d=\mu=1$, and replacing the birth function in
equation~(6.13) of Yi and Zhao by $\widetilde f$, we obtain
$$
u_t=u_{xx}-u+\widetilde f(u),
\qquad x>0,\quad t>0,
$$
with the homogeneous Dirichlet boundary condition $u(t,0)=0$.
For initial data satisfying
$$
0\leq \phi\leq 1,
\qquad
\phi(0)=0,
\qquad
\phi\not\equiv 0,
$$
the comparison principle keeps the solution in the interval $[0,1]$.
On this invariant interval,
$$
-u+\widetilde f(u)=u(1-u),
$$
so the equation coincides with the autonomous logistic Fisher--KPP
problem considered here. Consequently, Corollary~6.3 of Yi and Zhao
establishes the existence and uniqueness of the nontrivial stationary
profile connecting $0$ to $1$, together with convergence to this
profile uniformly for
$$
0\leq x\leq(2-\varepsilon)t
$$
for every $\varepsilon\in(0,2)$. Whenever local
convergence of a Robin solution to a stationary profile is needed, it
is stated explicitly among the hypotheses.

The comparison principle and the finite-interval logistic stationary problem used in Lemma~\ref{lem:neumann-convergence} are standard; see, for example,\cite{AB,EK}. The semigroup and fixed-point framework used in
Theorem~\ref{thm:periodic-existence} and
Proposition~\ref{prop:periodic-expansion} follows the standard
approach for semilinear parabolic equations; see \cite{CK,DK}.
The contributions of this paper are as follows. For the autonomous
logistic equation, we determine the Dirichlet and Robin boundary
profiles explicitly, obtain their sharp $e^{-x}$ asymptotics, and
describe the monotone dependence of the positive profile on a general
homogeneous Robin coefficient. In addition to the conditional local
comparison valid for solutions already known to converge, we establish
a quantitative exponential Neumann-Robin comparison for initial data
in a neighbourhood of the corresponding stationary profiles. For
small time-periodic Neumann and Robin boundary forcing, a boundary
lifting and an exponentially stable homogeneous semigroup yield a
periodic lifted mild solution that is unique in a fixed neighbourhood of the steady state. We obtain a first-order response expansion with a uniform quadratic remainder and prove nonlinear exponential attraction of this periodic state. The Dirichlet periodic-forcing problem is not included, because its nonhomogeneous boundary input requires a different phase-space or maximal-regularity formulation.
\section{Asymptotic comparison and Neumann convergence}
\label{sec:asymptotic-comparison}
We first clarify why no nonconstant time-periodic Dirichlet profile is
used for the autonomous logistic equation.

\begin{remark}[Gradient structure of the autonomous Dirichlet problem]
\label{rem:dirichlet-gradient}
Let
$$
F(s):=\frac12s^2-\frac13s^3,
$$
so that \(F'(s)=s(1-s)\). Let \(u\) be a sufficiently regular
solution of
$$
u_t=u_{xx}+u(1-u),\qquad u(0,t)=0,
$$
and assume that $u(x,t)\to a\in\{0,1\}$ as $x\to\infty$, with
sufficient decay for the following energy and the integration by parts
to be well defined. Set
$$
\mathcal E_a[u(t)]
 :=\int_0^\infty
 \left(
   \frac12|u_x(x,t)|^2-F(u(x,t))+F(a)
 \right)\,dx.
$$
Then
$$
\frac{d}{dt}\mathcal E_a[u(t)]
 =-\int_0^\infty |u_t(x,t)|^2\,dx\le0.
$$
Indeed, the boundary term at $x=0$ vanishes because
$u_t(0,t)=0$. Consequently, if $u$ is $T$-periodic in time,
then
$$
0=\mathcal E_a[u(T)]-\mathcal E_a[u(0)]
 =-\int_0^T\int_0^\infty |u_t(x,t)|^2\,dx\,dt,
$$
and hence $u_t\equiv0$. Thus every time-periodic solution in this
energy class is stationary. In the decay-to-zero case, let \(V\) be a stationary solution satisfying
$$
V''+V(1-V)=0,\qquad V(0)=0,
$$
and
$$
V(x)\to 0,\qquad V'(x)\to 0
\quad\text{as }x\to\infty.
$$
Multiplying the stationary equation by $V'$ gives the first integral
$$
\frac{1}{2}(V')^2+\frac{1}{2}V^2-\frac{1}{3}V^3=C.
$$
The decay at infinity yields $C=0$. Evaluating at $x=0$ and using
$V(0)=0$, we obtain $V'(0)=0$. Uniqueness for the corresponding
ordinary differential equation initial-value problem therefore gives
$V\equiv 0$. Hence, under the decay-to-zero condition, the autonomous homogeneous Dirichlet problem admits no nonzero $T$-periodic solution in this energy class. By contrast, this observation does not exclude the nonzero stationary Dirichlet profile satisfying
$$
V_D(0)=0,\qquad V_D(x)\to1.
$$
\end{remark}

The following Robin statement is conditional: convergence of both
solutions to the same stationary profile is an assumption of the
statement.
\begin{lemma}[Local asymptotic equivalence of Robin solutions]
\label{lem:robin-equivalence}
Let $u_R$ and $v_R$ be two solutions of the Fisher--KPP equation
on $[0,\infty)$ with Robin boundary condition. Assume that, for every $R>0$,
$$
\lim_{t\to\infty}
\sup_{x\in[0,R]} |u_R(x,t)-V_R(x)|=0,
$$
and
$$
\lim_{t\to\infty}
\sup_{x\in[0,R]} |v_R(x,t)-V_R(x)|=0.
$$
Then, for every $R>0$,
$$
\lim_{t\to\infty}
\sup_{x\in[0,R]} |u_R(x,t)-v_R(x,t)|=0.
$$
\end{lemma}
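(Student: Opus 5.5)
This is a direct consequence of the triangle inequality. The hypotheses already say that both solutions converge to the same stationary profile $V_R$ locally uniformly. No property of the equation, the boundary condition, or the particular form of $V_R$ is needed. The only role of $V_R$ is as a common intermediate function that is independent of $t$.

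Fix $R>0$. For every $t>0$ and every $x\in[0,R]$, we write
$$
|u_R(x,t)-v_R(x,t)|\le |u_R(x,t)-V_R(x)|+|V_R(x)-v_R(x,t)|.
$$
Taking the supremum over $x\in[0,R]$ on both sides, and using that the supremum of a sum is at most the sum of the suprema, we obtain
$$
\sup_{x\in[0,R]}|u_R(x,t)-v_R(x,t)|\le \sup_{x\in[0,R]}|u_R(x,t)-V_R(x)|+\sup_{x\in[0,R]}|v_R(x,t)-V_R(x)|.
$$
Both terms on the right tend to $0$ as $t\to\infty$ by the two assumptions, applied with the same $R$. The left-hand side is nonnegative, so the squeeze argument gives the claimed limit. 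Since $R>0$ was arbitrary, the conclusion holds for every $R$.

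The only point requiring care is that the suprema be finite for large $t$, so that the inequality between suprema is meaningful. This is guaranteed by the hypotheses themselves, since each supremum has limit $0$ and is therefore finite for all sufficiently large $t$. Apart from this, there is no real obstacle.
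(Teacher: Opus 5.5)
Your proposal is correct and is exactly the paper's argument: the paper proves the lemma in one line by the triangle inequality through the common time-independent profile $V_R$, which is what you write out in detail. Your remark about finiteness of the suprema is harmless but not needed, since the inequality between suprema also holds in $[0,\infty]$.
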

    
    \textbf{Proof: } This follows directly from the triangle inequality.$\blacksquare$

\begin{lemma}[Local uniform convergence of the Neumann solution]
\label{lem:neumann-convergence}
Let $u_N$ be the classical solution of
$$(u_N)_t=(u_N)_{xx}+u_N(1-u_N),
    \qquad x>0,\ t>0,$$
with Neumann boundary condition
$$(u_N)_x(0,t)=0,$$
and initial data
$$u_N(x,0)=\phi(x).$$
Assume that $$\phi\in C^2_b[0,\infty), \phi'(0)=0,$$
$$0\le \phi(x)\le 1,\qquad \phi\not\equiv0.$$
Then
$$0\le u_N(x,t)\le 1,
    \qquad x\ge0,\ t\ge0,$$
and for every $R>0$,
$$\lim_{t\to\infty}
    \sup_{x\in[0,R]} |u_N(x,t)-1|=0.$$
Equivalently,
$$ u_N(\cdot,t)\to 1
    \quad \text{locally uniformly on } [0,\infty).$$
\end{lemma}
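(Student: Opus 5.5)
The plan has three steps: invariance of $[0,1]$, a lower barrier built from a Neumann stationary problem on a finite interval, and an upper barrier from the spatially constant logistic ODE solution.

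\textbf{Invariance of $[0,1]$.} The constants $0$ and $1$ are stationary solutions of the Neumann problem. Since $0\le\phi\le1$, the comparison principle for bounded classical solutions on the half-line with Neumann condition (see \cite{AB,EK}) gives $0\le u_N\le 1$. Applying the strong maximum principle together with Hopf's lemma at $x=0$ (where the Neumann condition rules out a boundary minimum) then upgrades this to $u_N(x,t)>0$ for all $x\ge0$ and $t>0$, because $\phi\not\equiv0$.

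\textbf{Lower barrier from a finite-interval problem.} Fix $t_0=1$. For each $L>\pi/2$, consider on $[0,L]$ the stationary problem
\[
W''+W(1-W)=0,\qquad W'(0)=0,\qquad W(L)=0 .
\]
This is the Neumann problem on $[0,2L]$ after even reflection. Because $L>\pi/2$, the principal eigenvalue of $-\partial_{xx}-1$ with these mixed boundary conditions is negative. Standard theory for the logistic equation on a bounded interval (\cite{AB,EK}) then gives a unique positive solution $W_L$, and it satisfies $0<W_L<1$.

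To see that $W_L\to1$ locally uniformly as $L\to\infty$, I would use the following argument. By uniqueness and sub/supersolution comparison, $W_L$ is increasing in $L$ (extend $W_L$ by $0$ to obtain a subsolution on a larger interval). It is also decreasing in $x$, which follows from the phase plane. Hence $W_L$ converges to a bounded solution $W_\infty$ on $[0,\infty)$ with $W_\infty'(0)=0$ and $W_\infty>0$. The first integral $\tfrac12 (W')^2+F(W)=F(W(0))$, with $F$ as in Remark~\ref{rem:dirichlet-gradient}, shows that a bounded positive monotone solution with $W'(0)=0$ must be the constant $1$: a nonconstant trajectory starting at rest at a level in $(0,1)$ is periodic and changes sign. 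Dini's theorem then upgrades the convergence to local uniform convergence.

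\textbf{Comparison on $[0,L]\times[t_0,\infty)$.} Since $u_N(\cdot,t_0)$ is positive and continuous on $[0,L]$, there exists $\delta\in(0,1]$ with $\delta W_L\le u_N(\cdot,t_0)$. Let $w$ solve the finite-interval parabolic problem
\[
w_t=w_{xx}+w(1-w),\qquad w_x(0,t)=0,\qquad w(L,t)=0,\qquad w(\cdot,t_0)=\delta W_L .
\]
The function $\delta W_L$ is a stationary subsolution because $\delta W_L(1-\delta W_L)\ge \delta W_L(1-W_L)$. Hence $w$ is nondecreasing in $t$, bounded by $W_L$, and converges to a positive stationary solution. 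By uniqueness this limit is $W_L$, and the convergence is uniform on $[0,L]$. On $x=L$ we have $u_N\ge0=w$, so comparison on $[0,L]$ gives $u_N\ge w$. Therefore
\[
\liminf_{t\to\infty}\inf_{[0,R]}u_N \;\ge\; \inf_{[0,R]}W_L \qquad\text{for every } L>R .
\]
Letting $L\to\infty$ gives $\liminf_{t\to\infty}\inf_{[0,R]}u_N\ge1$, and combined with $u_N\le1$ this proves the claim.

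\textbf{Main obstacle.} I expect the convergence $W_L\to1$ to require the most care, namely excluding other bounded positive limits. The phase-plane argument above handles this. An alternative is the explicit lower bound $W_L\ge 1-C e^{-(L-x)/\sqrt{2}}$, obtained by comparison with shifted solutions, but I would try the monotonicity plus first-integral route first.
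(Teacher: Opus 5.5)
Your proposal is correct and follows the paper's route: a lower barrier from the mixed Neumann--Dirichlet logistic problem on $[0,L]$ with $\lambda_1(L)=(\pi/(2L))^2<1$, convergence of that barrier to its positive steady state, and the large-domain limit $V_L\to1$. The only differences are that you prove the two steps the paper cites (you start at $t_0=1$ from the stationary subsolution $\delta W_L$, which gives monotone convergence to $W_L$, instead of starting at $t=0$ from a compactly supported $\psi\le\phi$, and you derive $W_L\to1$ from monotonicity in $L$ and the phase plane), plus one harmless slip: even reflection turns the mixed problem into the Dirichlet problem on $[-L,L]$, not a Neumann problem on $[0,2L]$, though nothing later uses that remark.
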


\textbf{Proof:}
The bound
$$0\leq u_N(x,t)\leq 1$$
follows from the parabolic maximum principle; see, for example, \cite{EK}. Indeed, $0$ and $1$ are respectively a subsolution and a supersolution of the Fisher--KPP equation.

It remains to prove the local convergence to $1$. Fix $R>0$. Since
$\phi\not\equiv0$, there exists an interval on which $\phi$ is
positive. Choose a compact interval
$$
I\Subset(0,\infty)
$$
on which $\phi>0$. Choose
$$
L>\max\left\{R,\frac{\pi}{2}\right\}
$$
so large that $I\subset(0,L)$, and choose
$$
0\not\equiv\psi\in C_c^\infty(I),
\qquad
0\le\psi\le\phi.
$$
Consider the auxiliary problem on the finite interval $[0,L]$:
$$\begin{cases}
v_t=v_{xx}+v(1-v), & 0<x<L,\ t>0,\\
v_x(0,t)=0, & t>0,\\
v(L,t)=0, & t>0,\\
v(x,0)=\psi(x), & 0\le x\le L.
\end{cases}
$$
By the comparison principle\cite{EK}, since
$$v(x,0)=\psi(x)\le \phi(x)=u_N(x,0),$$
and since
$$v(L,t)=0\le u_N(L,t),$$
while both functions satisfy the Neumann boundary condition at $x=0$, we obtain
$$0\le v(x,t)\le u_N(x,t),\qquad 0\le x\le L,\ t\ge0.$$
The principal eigenvalue of the mixed Neumann-Dirichlet problem is
$$
\lambda_1(L)=\left(\frac{\pi}{2L}\right)^2<1=f'(0).
$$
By the standard principal-eigenvalue criterion for positive
stationary solutions of logistic reaction-diffusion equations
on bounded intervals (see \cite{AB}), the inequality
$$
\lambda_1(L)<1=f'(0)
$$
implies that the finite-interval problem admits a unique
positive stationary solution $V_L$ satisfying
$$
\begin{cases}
    V_L''+V_L(1-V_L)=0, & 0<x<L,\\
    V_L'(0)=0,\\
    V_L(L)=0,
\end{cases}
$$
and the solution $v$ converges uniformly to $V_L$ on $[0,L]$:
$$\lim_{t\to\infty}\|v(\cdot,t)-V_L\|_{L^\infty(0,L)}=0.$$
Hence,
$$ \liminf_{t\to\infty} u_N(x,t)\ge V_L(x),
    \qquad 0\le x\le L.$$

By the standard large-domain limit for positive logistic
stationary solutions (see \cite{AB}), the profiles $V_L$ converge
to $1$ locally uniformly on $[0,\infty)$ as $L\to\infty$. Therefore, for the fixed $R>0$,
we may choose $L$ sufficiently large such that
$$V_L(x)\ge 1-\varepsilon,
    \qquad 0\le x\le R.$$

    Since $v(\cdot,t)\to V_L$ uniformly on $[0,L]$, and since
$V_L(x)\ge 1-\varepsilon$ on $[0,R]$, we have, for all sufficiently large $t$,
$$
v(x,t)\ge 1-2\varepsilon,\qquad 0\le x\le R.
$$
Since $v\le u_N$, it follows that
$$
u_N(x,t)\ge 1-2\varepsilon,\qquad 0\le x\le R
$$
for all sufficiently large $t$.
Since $0\le u_N\le1$, for all sufficiently large $t$,
$$
\sup_{x\in[0,R]}|u_N(x,t)-1|
 =1-\inf_{x\in[0,R]}u_N(x,t)
 \le2\varepsilon.
$$
Since $\varepsilon>0$ is arbitrary, the local uniform convergence
follows.
\section{Error Analysis and Asymptotic Behavior}
We first consider the autonomous logistic equation
$$
u_t=u_{xx}+u(1-u)
$$
and derive stationary-profile and long-time comparison estimates.
After establishing exponential stability of the corresponding
linearized semigroups, we turn to small time-periodic Neumann and
Robin boundary forcing in
Theorem~\ref{thm:periodic-existence} and
Proposition~\ref{prop:periodic-expansion}.

The stationary-profile estimates below use elementary ODE energy
identities and comparison arguments for logistic stationary profiles.
We emphasize that the far-field condition in this section is
$$
V(x)\longrightarrow1
\qquad\text{as }x\to\infty,
$$
rather than $V(x)\to0$. The corresponding energy level contains a
bounded nonnegative branch approaching the equilibrium
$$(V,V')=(1,0)$$.
\begin{theorem}
\label{thm:stationary-profiles}
 Let $V_D,V_R,V_N$ be stationary profiles satisfying
$$
V''+V(1-V)=0,\qquad x>0,
$$
with boundary conditions
$$
V_D(0)=0,\qquad V_R'(0)=V_R(0),\qquad V_N'(0)=0.
$$
Assume that
$$
V_D(x),V_R(x),V_N(x)\to1,\qquad
V_D'(x),V_R'(x),V_N'(x)\to0
\quad\text{as }x\to\infty,
$$
and
$$
0\leq V_D(x)\leq1,\qquad
0<V_R(x)\leq1,\qquad
0<V_N(x)\leq1.$$
Then $V_N\equiv1$. Moreover, for every $\delta\in(0,1)$,
there exists $C_\delta>0$ such that
$$|V_D(x)-V_R(x)|+|V_N(x)-V_R(x)|
\leq C_\delta e^{-(1-\delta)x}.$$

\end{theorem}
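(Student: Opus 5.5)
The plan is to use the first integral of the stationary equation, fixed by the far-field condition, and then read off both the explicit profiles and their decay rates. Multiplying $V''+V(1-V)=0$ by $V'$ gives
$$
\tfrac12(V')^2+F(V)=C,\qquad F(s)=\tfrac12 s^2-\tfrac13 s^3 .
$$
Letting $x\to\infty$ and using $V\to1$, $V'\to0$ yields $C=F(1)=\tfrac16$. Hence every admissible profile satisfies
$$
(V')^2=\tfrac13-V^2+\tfrac23V^3=\tfrac13(1-V)^2(1+2V).
$$

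For the Neumann case, evaluate at $x=0$ with $V_N'(0)=0$. This gives $(1-V_N(0))^2(1+2V_N(0))=0$. Since $V_N(0)>0$, we get $V_N(0)=1$. The initial data $(V,V')(0)=(1,0)$ coincide with the equilibrium, so ODE uniqueness gives $V_N\equiv1$.

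For the Dirichlet and Robin cases, first note that $V\le1$, and $V\not\equiv1$ because of the boundary condition. So $1-V>0$ near $0$. If $1-V$ vanished at some point, the derivative would vanish there too, and uniqueness would force $V\equiv1$; hence $V<1$ throughout. The sign of $V'$ is then fixed: $V'$ can only vanish where $V=1$ (note $1+2V>0$ because $V\ge0$), so $V'$ never vanishes. Since $V$ increases to $1$, we get $V'=\tfrac1{\sqrt3}(1-V)\sqrt{1+2V}>0$.

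This separable equation integrates explicitly. With $w=\sqrt{1+2V}\in[1,\sqrt3)$ we have $1-V=\tfrac12(3-w^2)$ and $w'=V'/w$, so
$$
w'=\tfrac{1}{2\sqrt3}(3-w^2),
$$
whose solution is $w=\sqrt3\tanh\!\big(\tfrac{x+x_0}{2}\big)$. This gives
$$
V(x)=\tfrac32\tanh^2\!\big(\tfrac{x+x_0}{2}\big)-\tfrac12
=1-\tfrac32\sech^2\!\big(\tfrac{x+x_0}{2}\big).
$$
The boundary condition then determines $x_0$:
\begin{itemize}
\item Dirichlet: $V_D(0)=0$ gives $\tanh^2(x_0/2)=1/3$.
\item Robin: the condition becomes the scalar equation $\tfrac1{\sqrt3}(1-V(0))\sqrt{1+2V(0)}=V(0)$. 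Its left side is decreasing in $V(0)\in(0,1)$ and its right side increasing, so there is a unique root $V_R(0)\in(0,1)$. This root corresponds to a unique $x_0>0$.
\end{itemize}
So both profiles are unique and explicit.

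For the exponential bound, use $\sech^2(y)\le4e^{-2y}$ to get
$$
0\le 1-V(x)\le 6e^{-x_0}e^{-x}
$$
for both $V_D$ and $V_R$. Then
$$
|V_D-V_R|\le(1-V_D)+(1-V_R)\le Ce^{-x},
\qquad
|V_N-V_R|=1-V_R\le Ce^{-x}.
$$
This is stronger than the claimed bound, so any $\delta\in(0,1)$ works with $C_\delta=C$.

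The step I expect to be the main obstacle is the Robin case. One must show that the root $V_R(0)$ exists and is unique, and check that the positive branch of the square root is the only consistent choice. Positivity of $V_R(0)$ together with $V_R'(0)=V_R(0)>0$ excludes the decreasing branch. If the explicit integration became messy, a fallback is to linearize at $V=1$: writing $W=1-V$ gives $W'=-\tfrac1{\sqrt3}W\sqrt{3-2W}$, and $\sqrt{3-2W}\ge\sqrt3(1-\delta)$ once $W$ is small, which is exactly why the statement allows the $\delta$-loss. Gronwall then yields $W\le C_\delta e^{-(1-\delta)x}$ directly.
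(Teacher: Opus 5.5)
Your proof is correct, but it obtains the tail estimate by a different route from the paper's. In the paper, the first integral is used only to identify $V_N\equiv1$ and to place $V_D$, $V_R$ on the increasing branch. The exponential bound is then proved without solving the ODE: set $z=1-V$, note that $z''=z(1-z)\ge(1-\delta)^2z$ once $z$ is small, and compare $z$ with the barrier $W=Ce^{-(1-\delta)(x-X_\delta)}$ by a maximum-principle argument (a positive interior maximum of $z-W$ would force $(z-W)''>0$).

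You instead integrate the increasing branch in closed form via $w=\sqrt{1+2V}$. Up to the factor $\sqrt3$, this is the substitution $q=\sqrt{(2V+1)/3}$ that the paper postpones to its later corollary on explicit profiles. Your route gives more: the sharp rate $e^{-x}$ with no $\delta$-loss, an explicit constant $6e^{-x_0}$, and existence and uniqueness of $V_D$ and $V_R$ at the same time. Your argument that $V<1$ everywhere and that $V'$ never vanishes also supplies the justification for the increasing branch. The paper only asserts this from the sign of $V'(0)$.

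The paper's barrier argument, in exchange, uses only $z\to0$ and the linear behaviour of $z''=z(1-z)$ near $z=0$. It needs neither monotonicity nor explicit integrability, so it would carry over essentially unchanged to a general nonlinearity with $f'(1)<0$, where no closed-form profile exists; the rate would then be $\sqrt{-f'(1)}-\delta$. Your first-order Gronwall fallback sits between the two: it needs the first integral and the sign of $V'$, but not the explicit solution.
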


\textbf{Proof:}
We first identify the Neumann stationary profile. The following calculation is the standard energy identity for the autonomous stationary Fisher-KPP equation. Since $V_N$ satisfies
$$V_N''+V_N(1-V_N)=0,
    \qquad V_N'(0)=0,
    \qquad V_N(x)\to1
    \quad \text{as } x\to\infty,$$
we multiply the equation by $V_N'$. This gives
$$V_N''V_N' + V_N(1-V_N)V_N'=0.$$
Hence
$$\frac{d}{dx}
    \left[ \frac12 (V_N')^2+\frac12 V_N^2-\frac13 V_N^3\right]=0.$$
Therefore, the following first integral is the standard energy identity associated with the autonomous stationary Fisher-KPP equation.
$$\frac12 (V_N')^2+\frac12 V_N^2-\frac13 V_N^3=C$$
for some constant $C$. Taking $x\rightarrow\infty$, and using
$V_N(x)\rightarrow 1$ and $V_N'(x)\to0$, we obtain
$$C=\frac12-\frac13=\frac16.$$
At $x=0$, since $V_N'(0)=0$, we get
$$\frac12 V_N(0)^2-\frac13 V_N(0)^3=\frac16.$$
Equivalently,
$$2V_N(0)^3-3V_N(0)^2+1=0.$$
Factoring the polynomial gives
$$ (V_N(0)-1)^2(2V_N(0)+1)=0.$$
Since $V_N>0$, it follows that
$$V_N(0)=1.$$
Together with $V_N'(0)=0$, this gives the initial data
$$
V_N(0)=1,\qquad V_N'(0)=0.
$$
The constant function $V\equiv 1$ satisfies the same initial value problem
$$
V''+V(1-V)=0,\qquad V(0)=1,\qquad V'(0)=0.
$$
By uniqueness for ordinary differential equations, we conclude that
$$
V_N(x)\equiv 1.
$$
The same first integral holds for $V_D$ and $V_R$, because they
satisfy the same far-field conditions:
$$
\frac12(V')^2+\frac12V^2-\frac13V^3=\frac16.
$$
Equivalently,
$$
(V')^2=\frac{(1-V)^2(2V+1)}{3}.
$$
For the Dirichlet profile, evaluation at \(x=0\) gives
$$
\bigl(V_D'(0)\bigr)^2=\frac13.
$$
Since \(V_D(0)=0\) and \(V_D\ge0\), necessarily
$$
V_D'(0)=\frac1{\sqrt3}>0.
$$
For the Robin profile,
$$
V_R'(0)=V_R(0)>0.
$$
Therefore both nonconstant profiles lie on the increasing branch
$$
V'=(1-V)\sqrt{\frac{2V+1}{3}},
$$
and approach $V=1$ monotonically. Thus the hypotheses of the
theorem select a bounded nonnegative branch of the separatrix; they do
not select an unbounded orbit or a spatially periodic sign-changing
orbit.
We now prove the exponential tail estimate for the Robin stationary
profile. Define
$$z_R(x)=1-V_R(x).$$
Since $V_R''+V_R(1-V_R)=0$, we obtain
$$z_R''=z_R-z_R^2=z_R(1-z_R).$$
By assumption, $V_R(x)\rightarrow1$, and hence
$$z_R(x)\to0
    \qquad \text{as } x\to\infty.$$
Fix $\delta\in(0,1)$. Choose $X_\delta>0$ sufficiently large so that
for all $x\ge X_\delta$,
$$0\le z_R(x)\le 1-(1-\delta)^2.$$
Then
$$z_R''(x)=z_R(x)(1-z_R(x))
    \ge (1-\delta)^2 z_R(x),
    \qquad x\ge X_\delta.$$
Let
$$W(x)=C e^{-(1-\delta)(x-X_\delta)}.$$
Choose $C>0$ sufficiently large so that
$$z_R(X_\delta)\le W(X_\delta).$$
We claim that
$$ z_R(x)\le W(x),
    \qquad x\ge X_\delta.$$
Suppose, for contradiction, that this is false. Define
$$Z(x)=z_R(x)-W(x).$$
Then $Z(X_\delta)\le0$, and since both $z_R(x)$ and $W(x)$ tend to
zero as $x\to\infty$, we have
$$\lim_{x\to\infty} Z(x)=0.$$
If $Z$ is positive somewhere on $[X_\delta,\infty)$, then $Z$
attains a positive interior maximum at some point
$x_*>X_\delta$. At this point,
$$Z(x_*)>0,\qquad Z''(x_*)\leq0.$$
However,
$$ Z''(x_*)
    =z_R''(x_*)-W''(x_*)
    \ge (1-\delta)^2 z_R(x_*)-(1-\delta)^2 W(x_*)
    =(1-\delta)^2 Z(x_*)>0,$$
which is a contradiction. Hence
$$z_R(x)\le W(x),
    \qquad x\geq X_\delta.$$
Therefore,
$$ 1-V_R(x)=z_R(x)\le C_\delta e^{-(1-\delta)x}.$$
Since $V_N\equiv1$, it follows that
$$|V_N(x)-V_R(x)|
    =1-V_R(x)
    \le C_\delta e^{-(1-\delta)x}.$$

The same argument applies to the Dirichlet stationary profile. Indeed,
if
$$ z_D(x)=1-V_D(x),$$
then
$$ z_D''=z_D-z_D^2=z_D(1-z_D),$$
and $z_D(x)\rightarrow0$ as $x\rightarrow\infty$. Thus, for every
$\delta\in(0,1)$,
$$1-V_D(x)\le C_\delta e^{-(1-\delta)x}.$$
Again increasing $C_\delta$ if necessary, this estimate holds for all $x\ge 0$.
Consequently,
$$ |V_D(x)-V_R(x)|
    \le |1-V_D(x)|+|1-V_R(x)|
    \le C_\delta e^{-(1-\delta)x}.$$
This proves the stationary error estimate.
\begin{remark}[Existence and uniqueness of the stationary profiles]
The existence and uniqueness of the nontrivial Dirichlet stationary
profile were already established in Corollary~6.3 of Yi and Zhao
\cite{YZ}. For completeness, the calculation below records an
elementary phase-plane characterization of the same profile in the
normalization used here. For the Dirichlet profile, the first integral gives
$$
V_D(0)=0,
\qquad
V_D'(0)=\frac1{\sqrt3}.
$$
The increasing branch is determined by
$$
V'
=
(1-V)\sqrt{\frac{2V+1}{3}}.
$$
Separation of variables shows that this solution is defined for all
$x\ge0$ and satisfies $V_D(x)\to1$.

For the Robin profile, let $a_R=V_R(0)$. Since
$V_R'(0)=V_R(0)=a_R$, the first integral gives
$$
2a_R^3-6a_R^2+1=0.
$$
The function
$$
g(a)=2a^3-6a^2+1
$$
is strictly decreasing on $(0,1)$, with $g(0)>0$ and $g(1)<0$.
Hence there exists a unique $a_R\in(0,1)$. The corresponding
increasing solution of
$$
V'
=
(1-V)\sqrt{\frac{2V+1}{3}}
$$
is defined on $[0,\infty)$ and converges to $1$.
\end{remark}
\begin{remark}
\label{rem:stationary-orbits}
Theorem~\ref{thm:stationary-profiles} does not concern the stationary
problem with far-field condition \(V(x)\to0\). Statements about
unbounded or spatially periodic sign-changing solutions refer to
different phase-plane orbits and do not contradict the profiles used
in this theorem.
\end{remark}

\begin{theorem}[Local asymptotic Neumann--Robin decomposition]
\label{thm:NR-local-decomposition}
Let $u_N$ and $u_R$ be classical solutions of
$$
u_t=u_{xx}+u(1-u),\qquad x>0,\quad t>0,
$$
with boundary conditions
$$
(u_N)_x(0,t)=0,
\qquad
(u_R)_x(0,t)=u_R(0,t),
$$
respectively. Let $\phi_N,\phi_R\in C_b^2([0,\infty))$ denote the respective
initial data. Assume that
$$
\phi_N'(0)=0,
\qquad
\phi_R'(0)=\phi_R(0),
$$
and
$$
0\leq \phi_N(x)\leq 1,
\qquad
0\leq \phi_R(x)\leq 1
\quad (x\geq 0),
\qquad
\phi_N\not\equiv 0.
$$

Let $V_R$ be the positive Robin stationary profile satisfying
$$
V_R''+V_R(1-V_R)=0,
\qquad
V_R'(0)=V_R(0),
\qquad
V_R(x)\to1
\quad\text{as }x\to\infty.
$$
Assume explicitly that $u_R(\cdot,t)\to V_R$ locally uniformly as
$t\to\infty$.

For every $R>0$, define
$$
E_N^R(t)
:=
\sup_{y\in[0,R]}|u_N(y,t)-1|,
\qquad
E_R^R(t)
:=
\sup_{y\in[0,R]}|u_R(y,t)-V_R(y)|.
$$
Then
$$
\sup_{x\in[0,R]}
\left|
u_N(x,t)-u_R(x,t)-\bigl(1-V_R(x)\bigr)
\right|
\le E_N^R(t)+E_R^R(t).
$$
Consequently,
$$
u_N(\cdot,t)-u_R(\cdot,t)
\longrightarrow 1-V_R
$$
uniformly on every compact interval as $t\to\infty$.
For initial data close to the corresponding stationary profiles, a
quantitative exponential version of this comparison is given in
Corollary~\ref{cor:quantitative-autonomous-NR} below.
\end{theorem}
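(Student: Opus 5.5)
The plan is to rewrite the difference to be estimated as a sum of the two individual errors, apply the triangle inequality pointwise, and then take suprema; the convergence statement then follows from Lemma~\ref{lem:neumann-convergence} together with the explicit hypothesis on $u_R$.

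\textbf{Step 1 (the algebraic identity).} For fixed $R>0$, $t>0$ and $x\in[0,R]$, I would write
$$
u_N(x,t)-u_R(x,t)-\bigl(1-V_R(x)\bigr)
=\bigl(u_N(x,t)-1\bigr)-\bigl(u_R(x,t)-V_R(x)\bigr).
$$
The triangle inequality bounds the absolute value of the left side by
$$
|u_N(x,t)-1|+|u_R(x,t)-V_R(x)|\le E_N^R(t)+E_R^R(t).
$$
The right side does not depend on $x$, so taking the supremum over $x\in[0,R]$ gives the stated inequality. This step uses no properties of the equation beyond the fact that the two suprema are finite. Finiteness holds because $0\le u_N\le1$ by Lemma~\ref{lem:neumann-convergence}, $u_R$ is classical and hence bounded on compact sets, and $V_R$ is bounded (by Theorem~\ref{thm:stationary-profiles} and the subsequent remark, $0<V_R\le1$).

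\textbf{Step 2 (the Neumann error).} The hypotheses on $\phi_N$ are exactly those of Lemma~\ref{lem:neumann-convergence}: $\phi_N\in C_b^2$, $\phi_N'(0)=0$, $0\le\phi_N\le1$ and $\phi_N\not\equiv0$. That lemma therefore gives $E_N^R(t)\to0$ as $t\to\infty$ for every $R>0$.

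\textbf{Step 3 (the Robin error and conclusion).} By the explicit assumption that $u_R(\cdot,t)\to V_R$ locally uniformly, $E_R^R(t)\to0$ for every $R>0$. Combining Steps 1--3, $\sup_{[0,R]}|u_N-u_R-(1-V_R)|\to0$ for every $R>0$. Every compact interval is contained in some $[0,R]$, so this is the claimed uniform convergence on compact sets.

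There is no genuine obstacle here. The only point requiring care is that the Robin convergence is not proved but assumed, as the statement makes explicit. One should also note that $V_R$ is the unique positive profile identified in the remark following Theorem~\ref{thm:stationary-profiles}, so the limit $1-V_R$ is well defined.
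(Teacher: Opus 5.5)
Your proposal is correct and matches the paper's proof essentially verbatim: you rewrite the quantity as $(u_N-1)-(u_R-V_R)$, apply the triangle inequality, take the supremum over $[0,R]$, invoke Lemma~\ref{lem:neumann-convergence} for $E_N^R(t)\to0$, and use the assumed Robin convergence for $E_R^R(t)\to0$. Your finiteness remarks are harmless but not needed, since the inequality would hold trivially even if a supremum were infinite.
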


\textbf{Proof:}
By Theorem~3.1, the Neumann stationary profile is $V_N\equiv1$.
For every $x\in[0,R]$,
$$
\begin{aligned}
&\left|
u_N(x,t)-u_R(x,t)-\bigl(1-V_R(x)\bigr)
\right|\\
&\qquad
=
\left|
\bigl(u_N(x,t)-1\bigr)
-\bigl(u_R(x,t)-V_R(x)\bigr)
\right|\\
&\qquad
\le
|u_N(x,t)-1|
+
|u_R(x,t)-V_R(x)|.
\end{aligned}
$$
Taking the supremum over $x\in[0,R]$ gives
$$
\sup_{x\in[0,R]}
\left|
u_N(x,t)-u_R(x,t)-\bigl(1-V_R(x)\bigr)
\right|
\le E_N^R(t)+E_R^R(t).
$$

Lemma~2.2 gives
$$
E_N^R(t)\to0,
$$
and the assumed local uniform convergence of the Robin solution gives
$$
E_R^R(t)\to0.
$$
The claimed local uniform convergence follows.

\begin{corollary}[Far-field Neumann--Robin comparison]
\label{cor:NR-far-field}
Under the hypotheses of
Theorem~\ref{thm:NR-local-decomposition}, for every
\(\delta\in(0,1)\), there exists \(C_\delta>0\) such that, for every
\(R>0\), \(t\ge0\), and \(x\in[0,R]\),
\[
|u_N(x,t)-u_R(x,t)|
\le
E_N^R(t)+E_R^R(t)
+C_\delta e^{-(1-\delta)x}.
\]
Consequently,
\[
\lim_{x\to\infty}
\limsup_{t\to\infty}
|u_N(x,t)-u_R(x,t)|=0.
\]
\end{corollary}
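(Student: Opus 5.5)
The plan is to combine the pointwise decomposition from Theorem~\ref{thm:NR-local-decomposition} with the exponential tail bound for $V_R$ from Theorem~\ref{thm:stationary-profiles}. Fix $\delta\in(0,1)$. From the proof of Theorem~\ref{thm:stationary-profiles} we have $0\le 1-V_R(x)\le C_\delta e^{-(1-\delta)x}$ for $x\ge X_\delta$. On the compact interval $[0,X_\delta]$ we have $0\le 1-V_R\le 1$ (because $0<V_R\le1$), while $e^{-(1-\delta)x}\ge e^{-(1-\delta)X_\delta}$ there. Enlarging $C_\delta$ therefore gives the bound for all $x\ge0$. The constant depends only on $\delta$ and on the fixed profile $V_R$, not on $R$, $t$, or the initial data, which is why it can be taken uniform in $R$ and $t$.

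Next, for $x\in[0,R]$ and $t\ge0$, the triangle inequality gives
\[
|u_N(x,t)-u_R(x,t)|\le \bigl|u_N(x,t)-u_R(x,t)-(1-V_R(x))\bigr|+|1-V_R(x)|.
\]
Theorem~\ref{thm:NR-local-decomposition} bounds the first term by $E_N^R(t)+E_R^R(t)$, and the previous step bounds the second by $C_\delta e^{-(1-\delta)x}$. This yields the displayed estimate.

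For the limit statement, fix $x$ and take $R=x$ (or any $R\ge x$). Lemma~\ref{lem:neumann-convergence} gives $E_N^R(t)\to0$, and the assumed local uniform convergence of $u_R$ to $V_R$ gives $E_R^R(t)\to0$. Hence
\[
\limsup_{t\to\infty}|u_N(x,t)-u_R(x,t)|\le C_\delta e^{-(1-\delta)x},
\]
and the right-hand side tends to $0$ as $x\to\infty$. In fact the sharper identity $\limsup_{t\to\infty}|u_N-u_R|=1-V_R(x)$ also follows, but it is not needed.

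The only delicate point is making $C_\delta$ valid on all of $[0,\infty)$ rather than just for $x\ge X_\delta$. This is handled by the compactness argument above, using the bound $0<V_R\le1$ from Theorem~\ref{thm:stationary-profiles}.
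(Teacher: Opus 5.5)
Your proposal is correct and follows essentially the same route as the paper. It combines the triangle inequality with the decomposition of Theorem~\ref{thm:NR-local-decomposition} and the exponential tail bound for $1-V_R$ from Theorem~\ref{thm:stationary-profiles}, then takes $t\to\infty$ at fixed $x$ followed by $x\to\infty$. Your explicit extension of $C_\delta$ from $[X_\delta,\infty)$ to all of $[0,\infty)$ fills in a step the paper takes for granted when it quotes the tail estimate for every $x\ge0$.
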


\textbf{Proof:}
Theorem~3.1 gives
\[
|1-V_R(x)|
\le C_\delta e^{-(1-\delta)x},
\qquad x\ge0.
\]
Combining this estimate with
Theorem~\ref{thm:NR-local-decomposition}, we obtain
\[
\begin{aligned}
|u_N(x,t)-u_R(x,t)|
&\le
\left|
u_N(x,t)-u_R(x,t)-\bigl(1-V_R(x)\bigr)
\right|
+|1-V_R(x)|\\
&\le
E_N^R(t)+E_R^R(t)
+C_\delta e^{-(1-\delta)x}.
\end{aligned}
\]

In particular, for \(0\le A\le R\),
\[
\sup_{x\in[A,R]}
|u_N(x,t)-u_R(x,t)|
\le
E_N^R(t)+E_R^R(t)
+C_\delta e^{-(1-\delta)A}.
\]
Letting \(t\to\infty\), for every fixed \(x\ge0\),
\[
\limsup_{t\to\infty}
|u_N(x,t)-u_R(x,t)|
\le C_\delta e^{-(1-\delta)x}.
\]
Finally, letting \(x\to\infty\) proves the result.

\begin{proposition}[Exponential stability of the linearized semigroups]
\label{prop:linearized-stability}
Let $B\in\{B_N,B_R\}$, where
$$
B_N\phi=\phi'(0),
\qquad
B_R\phi=\phi'(0)-\phi(0).
$$
Let
$$
V_N\equiv1
$$
and let $V_R$ be the positive Robin stationary profile from
Theorem~3.1. Define
$$
X=C_0([0,\infty))
$$
and
$$
L_B\phi=\phi''+(1-2V_B)\phi
$$
with domain
$$
D(L_B)=
\left\{
\phi\in X:
\phi,\phi' \text{ are locally absolutely continuous},
\ \phi''\in X,\ B\phi=0
\right\}.
$$
Then $L_B$ generates a positive $C_0$-semigroup $S_B(t)$ on
$X$. Moreover, set
$$
m_B:=\inf_{x\geq 0}V_B(x).
$$
For $B=B_N$, we have $m_N=1$. For $B=B_R$,
Theorem~3.1 shows that $V_R$ is increasing, and therefore
$$
m_R=V_R(0)>0.
$$
Consequently,
$$
\|S_B(t)\|_{\mathcal L(X)}
\leq \frac{1}{m_B}e^{-m_Bt},
\qquad t\geq 0.
$$
In particular, $S_B(t)$ is exponentially stable.
\end{proposition}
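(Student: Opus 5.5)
The plan has two parts: first show that $L_B$ generates a positive $C_0$-semigroup, then obtain the decay bound by comparing with a positive stationary supersolution built from $V_B$ itself.

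\emph{Generation and positivity.} Write $L_B=A_B+q_B$, where $A_B\phi=\phi''$ on the stated domain and $q_B=1-2V_B$. The function $q_B$ is bounded and continuous, and $q_B\to -1$ at infinity, so multiplication by $q_B$ maps $X=C_0([0,\infty))$ boundedly into itself. I will first treat $A_B$ alone.
\begin{itemize}
\item For $B=B_N$, the semigroup of $A_B$ is the heat semigroup on the half-line obtained by even reflection.
\item For $B=B_R$, I will use the classical explicit Robin heat kernel, or equivalently the Lumer--Phillips theorem. Dissipativity follows from a maximum-point argument: at a positive maximum $x_0$ of $\phi$, either $x_0>0$ and $\phi''(x_0)\le 0$, or $x_0=0$. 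In the second case the condition $\phi'(0)=\phi(0)>0$ contradicts maximality, so $x_0=0$ cannot occur.
\item The range condition $\lambda-A_B$ onto $X$ for $\lambda>0$ is an explicit ODE resolvent computation with kernel built from $e^{\pm\sqrt\lambda x}$.
\end{itemize}
Positivity of $e^{tA_B}$ follows from positivity of its resolvent, which comes from the same maximum-principle argument. Since $q_B$ is a bounded multiplication operator, the bounded perturbation theorem shows that $L_B$ generates a $C_0$-semigroup $S_B(t)$. Positivity is preserved by the Trotter product formula, or by writing $q_B=(q_B+\|q_B\|_\infty)-\|q_B\|_\infty$ with the first term nonnegative and using the Dyson--Phillips series.

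\emph{Decay estimate.} The key observation is that $V_B$ is a positive strict supersolution of the linearized problem satisfying the boundary condition. From $V_B''=-V_B(1-V_B)$ we get
\[
L_BV_B=-V_B(1-V_B)+(1-2V_B)V_B=-V_B^2\le -m_BV_B .
\]
Moreover $BV_B=0$: for the Neumann case this is trivial since $V_N\equiv1$, and for the Robin case it is the boundary condition of $V_R$. Now let $\phi\in X$ and set $w(x,t)=(S_B(t)\phi)(x)$. Put
\[
\overline w(x,t)=\frac{\|\phi\|_\infty}{m_B}\,e^{-m_Bt}\,V_B(x).
\]
Then $\partial_t\overline w-L_B\overline w=\frac{\|\phi\|}{m_B}e^{-m_Bt}\bigl(V_B^2-m_BV_B\bigr)\ge0$ and $B\overline w=0$. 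At $t=0$ we have $\overline w\ge \|\phi\|\ge \pm\phi$, because $V_B\ge m_B$. The comparison principle for the linear problem then gives $|w|\le \overline w$, and since $V_B\le 1$ this yields
\[
\|S_B(t)\phi\|_\infty\le \frac{1}{m_B}e^{-m_Bt}\|\phi\|_\infty .
\]
The value of $m_B$ comes from Theorem~3.1: $m_N=1$ because $V_N\equiv1$, and $m_R=V_R(0)>0$ because $V_R$ lies on the increasing branch.

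\emph{Main obstacle.} The delicate step is making the comparison rigorous, because $V_B\notin X$: it tends to $1$, not $0$. I would handle this in one of two ways.
\begin{itemize}
\item Run the whole construction on the larger space $BUC([0,\infty))$, where $V_B$ lies in the domain of the generator. There I would use positivity in the form $|S(t)\phi|\le S(t)|\phi|\le \frac{\|\phi\|}{m_B}S(t)V_B$, together with $S(t)V_B\le e^{-m_Bt}V_B$. The latter follows from $\frac{d}{dt}\bigl(e^{m_Bt}S(t)V_B\bigr)=e^{m_Bt}S(t)\bigl(L_B+m_B\bigr)V_B\le0$, which uses positivity once more. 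Finally, $X$ is invariant under the $BUC$ semigroup and the restriction to $X$ agrees with $S_B(t)$.
\item Alternatively, apply the classical parabolic comparison principle on $[0,\infty)$ in the class of bounded solutions, where boundedness substitutes for decay at infinity.
\end{itemize}
Note that a naive bound through $\sup q_B$ fails in the Robin case: $q_R(0)=1-2V_R(0)$ may be positive, which is exactly why the supersolution $V_B$ is needed.
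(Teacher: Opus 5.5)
Your proposal is correct, and its skeleton matches the paper's. Both arguments use generation by bounded perturbation of the Neumann/Robin heat semigroup, positivity via the Trotter formula, and the supersolution $\frac{\|\phi\|}{m_B}e^{-m_Bt}V_B$ built from $L_BV_B=-V_B^2\le -m_BV_B$ and $BV_B=0$. The genuine difference lies at the step you flag as the main obstacle.

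The paper stays inside $X$. For $\varphi\in D(L_B)$ it uses that the orbit $\{S_B(t)\varphi:0\le t\le\tau\}$ is compact in $C_0([0,\infty))$, hence uniformly small for large $x$. Meanwhile $Z$ stays above $\frac{\|\varphi\|}{m_B}e^{-m_B\tau}$ there because $V_B\to1$. This supplies the lateral inequality at $x=L$, so the classical comparison principle applies on the bounded rectangle $[0,L]\times[0,\tau]$. Density then extends the bound to all of $X$.

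Your $\mathrm{BUC}$ route replaces this truncation by a purely operator-theoretic argument. There $V_B$ lies in the extended domain, and $\tilde S_B(t)V_B\le e^{-m_Bt}V_B$ follows from positivity alone. This is cleaner, and it is an instance of a general fact: if a positive semigroup has a vector $u$ in its generator's domain with $m\le u\le 1$ and $Au\le-\omega u$, then $\|S(t)\|\le m^{-1}e^{-\omega t}$. It does, however, require bookkeeping that the paper avoids:
\begin{itemize}
\item Generation and positivity must be re-established on $\mathrm{BUC}([0,\infty))$. There the supremum need not be attained, so your maximum-point dissipativity argument has to be adapted or replaced by the explicit kernel or resolvent.
\item You must check that $X$ is invariant.
\item You must check that the part in $X$ of the extended generator is exactly $L_B$, so that the restricted semigroup is $S_B(t)$. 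This holds because multiplication by $1-2V_B$ preserves $X$.
\end{itemize}

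Your second option, comparison for bounded solutions on the half-line, is a Phragm\'en--Lindel\"of type principle that needs its own barrier at infinity. The paper's compactness trick is essentially a way of avoiding that. Note also that in your route positivity of $S_B$ does real work, whereas in the paper's argument the comparison is carried by the classical maximum principle on bounded rectangles.
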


\textbf{Proof:}
The homogeneous Neumann and Robin realizations of the heat operator
generate positive $C_0$-semigroups on $X$. Since
$$
1-2V_B\in C_b([0,\infty)),
$$
generation of $L_B$ follows from the bounded perturbation theorem.
Positivity follows, for example, from the Trotter product formula,
because the multiplication semigroup generated by $1-2V_B$ is
positive.
Positivity and exponential stability are distinct properties.
Here positivity is used to justify the comparison argument below,
whereas the strict supersolution constructed from \(V_B\) provides
the exponential decay. In particular, positivity alone is not being
used to imply exponential stability.
Let $\phi\in X$, and set
$$
u(t)=S_B(t)\phi.
$$
Because $V_B\ge m_B>0$,
$$
|\phi(x)|
\le \frac{\|\phi\|_X}{m_B}V_B(x).
$$
Set
$$
Z(t,x)
=\frac{\|\phi\|_X}{m_B}e^{-m_Bt}V_B(x).
$$
Since
$$
V_B''+V_B(1-V_B)=0,
$$
we have
$$
L_BV_B=-V_B^2.
$$
Consequently,
$$
Z_t-L_BZ
=
\frac{\|\phi\|_X}{m_B}e^{-m_Bt}
V_B(V_B-m_B)
\ge0.
$$
Moreover,
$$
BZ(t,\cdot)=0,
\qquad
-Z(0,\cdot)\leq \varphi\leq Z(0,\cdot).
$$

We justify the comparison on the unbounded interval by truncation.
The estimate is trivial when $\varphi=0$, so assume
$\|\varphi\|_X>0$. We first take $\varphi\in D(L_B)$, so that
$$
u(t)=S_B(t)\varphi
$$
is a classical solution for positive time.

Fix $\tau>0$. Since the map
$$
t\longmapsto u(t)
$$
is continuous from \([0,\tau]\) into \(X=C_0([0,\infty))\), the set
$$
\{u(t):0\leq t\leq \tau\}
$$
is compact in $X$. Hence it vanishes uniformly at infinity:
$$
\lim_{L\to\infty}
\sup_{0\leq t\leq \tau}
\sup_{x\geq L}|u(t,x)|=0.
$$
On the other hand,
$$
\inf_{0\leq t\leq \tau} Z(t,x)
=
\frac{\|\varphi\|_X}{m_B}
e^{-m_B\tau}V_B(x)
\longrightarrow
\frac{\|\varphi\|_X}{m_B}e^{-m_B\tau}>0
$$
as $x\to\infty$. Therefore, for all sufficiently large $L$,
$$
-Z(t,L)\leq u(t,L)\leq Z(t,L),
\qquad 0\leq t\leq \tau.
$$

Applying the parabolic comparison principle to $Z-u$ and $Z+u$
on $[0,L]\times[0,\tau]$, and then letting $L\to\infty$, gives
$$
|S_B(t)\varphi(x)|\leq Z(t,x),
\qquad x\geq 0,\quad 0\leq t\leq \tau.
$$
Since $\tau>0$ is arbitrary, this estimate holds for every $t\geq 0$.
Finally, it extends from $D(L_B)$ to all $\varphi\in X$ by density
and the strong continuity of $S_B(t)$.
Since $0<V_B\le1$,
$$
\|S_B(t)\phi\|_X
\le\frac{1}{m_B}e^{-m_Bt}\|\phi\|_X.
$$
By a lifted mild solution we mean a function of the form
$$
P_B^\varepsilon
=
V_B+\varepsilon p\rho_B+z_B,
$$
where $B\rho_B=1$ and $z_B$ is the $X$-valued mild solution of
the corresponding problem with homogeneous boundary condition.
Thus $P_B^\varepsilon(t)$ is considered in the affine space
$V_B+X$; only the perturbation
$$
P_B^\varepsilon(t)-V_B
$$
is required to belong to $X$. In particular, the steady state $V_B$
itself need not belong to $C_0([0,\infty))$.
\begin{theorem}[Existence and first-order bound for periodic boundary forcing]
\label{thm:periodic-existence}
Let
$$
X=C_0([0,\infty))
$$
be equipped with the supremum norm. Let the boundary operator $B$
be either
$$
B_Nu=u_x(0)
$$
or
$$
B_Ru=u_x(0)-u(0).
$$
Consider
$$
u_t=u_{xx}+f(u),
\qquad x>0,\quad t\in\mathbb R,
$$
with periodically forced boundary condition
$$
Bu(t,\cdot)=b_0+\varepsilon p(t),
$$
where
$$
p(t+T)=p(t),
\qquad
p\in C_{\mathrm{per}}^1([0,T]).
$$

Assume that there exists a steady state \(V_B\) satisfying
$$
V_B''+f(V_B)=0,
\qquad
BV_B=b_0,
\qquad
V_B(x)\to V_\infty
\quad\text{as }x\to\infty,
$$
Assume further that there exists an open interval $I_B\subset\mathbb R$
such that
$$
\overline{V_B([0,\infty))}\subset I_B
\qquad\text{and}\qquad
f\in C^2(I_B).
$$
Define the linearized differential expression
$$
\mathscr L_B\phi
:=
\phi''+f'(V_B)\phi,
$$
and let $L_B$ be its homogeneous realization on $X$:
$$
L_B\phi=\mathscr L_B\phi,
\qquad
\phi\in D(L_B),
$$
where
$$
D(L_B)
=
\left\{
\phi\in X:
\begin{array}{l}
\phi,\phi'\text{ are locally absolutely continuous},\\
\phi''\in X,\quad B\phi=0
\end{array}
\right\}.
$$
Assume that $L_B$ generates a $C_0$-semigroup $S_B(t)$ satisfying
$$
\|S_B(t)\|_{\mathcal L(X)}
\le M_Be^{-\gamma_Bt},
\qquad t\ge0,
$$
for some $M_B\ge1$ and $\gamma_B>0$.

Fix $\rho_B\in C_c^2([0,\infty))$ such that
$$
B\rho_B=1.
$$
Then there exist constants
$$
\varepsilon_B>0,
\qquad
r_B>0,
\qquad
C_B>0
$$
such that, for every $|\varepsilon|<\varepsilon_B$, there exists a
$T$-periodic lifted mild solution
$$
P_B^\varepsilon(t)
 =V_B+\varepsilon p(t)\rho_B+z_B^\varepsilon(t),
\qquad
z_B^\varepsilon\in C_{\mathrm{per}}([0,T];X).
$$
It is unique among all lifted $T$-periodic mild solutions satisfying
the fixed-neighbourhood condition
$$
\sup_{t\in[0,T]}
\|P_B^\varepsilon(t)-V_B\|_X
\le r_B.
$$
Moreover,
$$
\sup_{t\in[0,T]}
\|P_B^\varepsilon(t)-V_B\|_X
\le C_B|\varepsilon|.
$$
\end{theorem}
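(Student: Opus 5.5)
We lift the boundary datum and reduce the problem to a fixed point in the space of $T$-periodic continuous $X$-valued functions. Write $u=V_B+\varepsilon p(t)\rho_B+z$. Since $BV_B=b_0$ and $B\rho_B=1$, the condition $Bu=b_0+\varepsilon p(t)$ becomes $Bz=0$. Substituting and using $V_B''+f(V_B)=0$ gives
\[
z_t=L_Bz+N_\varepsilon(t,z),
\]
with
\[
N_\varepsilon(t,z)
:=\varepsilon\bigl(p\rho_B''+p\,f'(V_B)\rho_B-p'\rho_B\bigr)
+\mathcal R\bigl(\varepsilon p\rho_B+z\bigr),
\]
where
\[
\mathcal R(w):=f(V_B+w)-f(V_B)-f'(V_B)w .
\]
Because $\rho_B\in C_c^2$ and $p\in C^1_{\mathrm{per}}$, the forcing term $g_\varepsilon(t):=\varepsilon(p\rho_B''+pf'(V_B)\rho_B-p'\rho_B)$ lies in $C_{\mathrm{per}}([0,T];X)$ and has norm $\le c_0|\varepsilon|$. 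A lifted $T$-periodic mild solution is then exactly a fixed point $z\in Y:=C_{\mathrm{per}}([0,T];X)$ (identified with periodic functions on $\mathbb R$) of
\[
(\Phi_\varepsilon z)(t)=\int_{-\infty}^t S_B(t-s)\,N_\varepsilon(s,z(s))\,ds .
\]
The integral converges by the bound $\|S_B(t)\|\le M_Be^{-\gamma_Bt}$ and is $T$-periodic by periodicity of the integrand. We also need the standard fact that periodic mild solutions of the inhomogeneous problem coincide with this formula. This holds because the variation-of-constants formula over one period gives $z(0)=(I-S_B(T))^{-1}\int_0^TS_B(T-s)N\,ds$, and $I-S_B(T)$ is invertible since the spectral radius of $S_B(T)$ is less than $1$. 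Summing the Neumann series reproduces the integral over $(-\infty,t]$. This equivalence is what yields uniqueness among all lifted periodic mild solutions.

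Next we estimate the remainder $\mathcal R$. Since $\overline{V_B([0,\infty))}$ is a compact subset of the open interval $I_B$ (it is bounded because $V_B$ is continuous with a finite limit), there is $\eta>0$ with $\overline{V_B([0,\infty))}+[-\eta,\eta]\subset I_B$. Let $K:=\sup|f''|$ on this neighbourhood. Taylor's formula then gives, for $\|w\|_X,\|\tilde w\|_X\le\eta$,
\[
\|\mathcal R(w)\|_X\le \tfrac K2\|w\|_X^2,
\qquad
\|\mathcal R(w)-\mathcal R(\tilde w)\|_X\le K\max(\|w\|_X,\|\tilde w\|_X)\,\|w-\tilde w\|_X .
\]
We also check that $\mathcal R(w)\in X$ and that $t\mapsto \mathcal R(w(t))$ is continuous. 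The first holds because $\mathcal R(0)=0$ pointwise, so $\mathcal R(w)$ vanishes at infinity whenever $w$ does. The second follows from the local Lipschitz bound above.

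We then apply the contraction mapping principle. On the ball $\{z\in Y:\|z\|_Y\le r\}$ we have
\[
\|\Phi_\varepsilon z\|_Y\le \frac{M_B}{\gamma_B}\Bigl(c_0|\varepsilon|+\tfrac K2(c_1|\varepsilon|+r)^2\Bigr),
\qquad c_1:=\|p\|_\infty\|\rho_B\|_X ,
\]
and the Lipschitz constant of $\Phi_\varepsilon$ is at most $\frac{M_B}{\gamma_B}K(c_1|\varepsilon|+r)$. We first fix $r_0\le\eta/2$ so small that $\frac{M_B}{\gamma_B}K\cdot 2r_0\le\frac12$. We then take $\varepsilon_B$ small enough that $c_1\varepsilon_B\le r_0$ and that $\Phi_\varepsilon$ maps the ball of radius $r_0$ into itself. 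Banach's theorem gives a unique fixed point $z_B^\varepsilon$ in this ball. The a priori bound follows from the fixed-point equation:
\[
\|z\|\le \frac{M_B}{\gamma_B}c_0|\varepsilon|+\tfrac12\bigl(c_1|\varepsilon|+\|z\|\bigr),
\]
so $\|z\|_Y\le C|\varepsilon|$, and hence $\sup_t\|P_B^\varepsilon-V_B\|_X\le (c_1+C)|\varepsilon|$.

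The main obstacle is matching uniqueness to the stated fixed-neighbourhood condition, which is imposed on $P-V_B=\varepsilon p\rho_B+z$ rather than on $z$. We handle this by setting $r_B:=r_0/2$ and shrinking $\varepsilon_B$ further so that $c_1\varepsilon_B\le r_0/2$. Any lifted periodic mild solution with $\sup_t\|P-V_B\|_X\le r_B$ then has $\|z\|_Y\le r_B+c_1|\varepsilon|\le r_0$, so it lies in the contraction ball and must equal $z_B^\varepsilon$. We also shrink $\varepsilon_B$ so that $(c_1+C)\varepsilon_B\le r_B$, which ensures that the constructed solution itself satisfies the neighbourhood condition.
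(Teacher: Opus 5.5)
Your proposal is correct and follows the paper's proof essentially step by step. It uses the same lifting $u=V_B+\varepsilon p\rho_B+z$, the same operator $\int_{-\infty}^t S_B(t-s)[\cdots]\,ds$ on $C_{\mathrm{per}}([0,T];X)$, a contraction on a ball whose radius is fixed independently of $\varepsilon$, the $O(|\varepsilon|)$ bound read off from the fixed-point equation, and uniqueness by halving the radius so that any solution in the $r_B$-neighbourhood lands in the contraction ball; your Neumann-series identification via $(I-S_B(T))^{-1}$ is a repackaging of the paper's $n$-period iteration with $n\to\infty$, and your explicit $\eta$-neighbourhood of $\overline{V_B([0,\infty))}$ inside $I_B$ makes precise the local $C^2$ bounds the paper takes for granted.
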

\textbf{Proof:}
 We give the proof for $B\in\{B_N,B_R\}$. The Neumann and Robin cases differ only in the choice of the lifting function.
The role of the lifting function is to convert the nonhomogeneous boundary condition into a homogeneous one before applying the semigroup generated by $L_B$.
Choose a function

$$
\rho_B\in C_c^2([0,\infty))
$$

such that

$$
B\rho_B=1.
$$

For example, one may choose $\rho_B$ so that
$$
\rho'_N(0)=1
$$
in the Neumann case, and
$$
\rho'_R(0)-\rho_R(0)=1
$$
in the Robin case.

Set

$$
w_B(t,x)=P_B^\varepsilon(t,x)-V_B(x).
$$

Then $w_B$ satisfies

$$
(w_B)_t=(w_B)_{xx}+f(V_B+w_B)-f(V_B),
$$

with boundary condition

$$
Bw_B(t,\cdot)=\varepsilon p(t).
$$

We remove the nonhomogeneous boundary condition by writing

$$
w_B(t,x)=\varepsilon p(t)\rho_B(x)+z_B(t,x).
$$

Then

$$
Bz_B(t,\cdot)=0.
$$

Using the definitions of $\mathcal L_B$ and $L_B$, we obtain
$$
(z_B^\varepsilon)_t
=
L_Bz_B^\varepsilon
+\varepsilon F_B(t,\cdot)
+N_B\bigl(\varepsilon p(t)\rho_B+z_B^\varepsilon\bigr),
$$
where
$$
F_B(t,x)
=
p(t)\mathcal L_B\rho_B(x)-p'(t)\rho_B(x),
$$
and
$$
N_B(q)
=
f(V_B+q)-f(V_B)-f'(V_B)q.
$$
Since $p\in C^1_{\mathrm{per}}([0,T])$ and $\rho_B\in C_c^2([0,\infty))$, we have

$$
F_B\in C_{\mathrm{per}}([0,T];X).
$$

Moreover, since $f\in C^2$, there exist constants $K_B>0$ and $r_0>0$ such that whenever

$$
\|q\|_X\le r_0,
$$

we have

$$
\|N_B(q)\|_X\le K_B\|q\|_X^2,
$$

and whenever

$$
\|q_1\|_X,\|q_2\|_X\le r_0,
$$

we have

$$
\|N_B(q_1)-N_B(q_2)\|_X
\le K_B(\|q_1\|_X+\|q_2\|_X)\|q_1-q_2\|_X.
$$

Let

$$
Y_B=C_{\mathrm{per}}([0,T];X),
$$

with norm

$$
\|z\|_{Y_B}=\sup_{t\in[0,T]}\|z(t,\cdot)\|_X.
$$

For $z\in Y_B$, define
$$
(\mathcal T_B z)(t)
=
\int_{-\infty}^{t}
S_B(t-s)
\left[
\epsilon F_B(s,\cdot)
+
N_B\big(\epsilon p(s)\rho_B+z(s,\cdot)\big)
\right]\,ds.
$$

The integral is well-defined because $S_B(t)$ is exponentially stable. Also, since $F_B$, $p$, and $z$ are $T$-periodic, the function $\mathcal T_Bz$ is $T$-periodic.

Using the semigroup estimate, if $z\in Y_B$ and
$$
\|\epsilon p\rho_B+z\|_{Y_B}\le r_0,
$$
then
$$
\begin{aligned}
\|\mathcal T_B z\|_{Y_B}
&\le
\sup_{t\in[0,T]}
\int_{-\infty}^{t}
\|S_B(t-s)\|_{X\to X}
\left(
|\epsilon|\|F_B(s,\cdot)\|_X
+
\|N_B(\epsilon p(s)\rho_B+z(s,\cdot))\|_X
\right)\,ds  \\
&\le
\sup_{t\in[0,T]}
\int_{-\infty}^{t}
M_Be^{-\gamma_B(t-s)}
\left(
|\epsilon|\|F_B\|_{Y_B}
+
K_B\|\epsilon p\rho_B+z\|_{Y_B}^{2}
\right)\,ds  \\
&\le
\frac{M_B}{\gamma_B}
\left(
|\epsilon|\|F_B\|_{Y_B}
+
K_B\big(|\epsilon|\|p\rho_B\|_{Y_B}+\|z\|_{Y_B}\big)^2
\right).
\end{aligned}
$$
Set
$$
A_B:=\frac{M_B}{\gamma_B},\qquad
F_B^*:=\|F_B\|_{Y_B},\qquad
Q_B^*:=\|p\rho_B\|_{Y_B}.
$$
Choose a fixed $R_B>0$ so small that
$$
2R_B<r_0,
\qquad
4A_BK_BR_B\le\frac12.
$$
Next choose $\varepsilon_B>0$ sufficiently small that, whenever
$|\varepsilon|<\varepsilon_B$,
$$
|\varepsilon|Q_B^*\le R_B,
\qquad
A_B|\varepsilon|F_B^*\le\frac{R_B}{2}.
$$
Define the fixed closed ball
$$
\mathcal B_{R_B}
 :=\left\{
 z\in Y_B:\|z\|_{Y_B}\le R_B
 \right\}.
$$
Notice that this ball is independent of $\varepsilon$.

If $z\in\mathcal B_{R_B}$, then
$$
\|\varepsilon p\rho_B+z\|_{Y_B}
 \le |\varepsilon|Q_B^*+R_B
 \le2R_B<r_0.
$$
Therefore,
$$
\begin{aligned}
\|\mathcal T_Bz\|_{Y_B}
&\le
A_B|\varepsilon|F_B^*
 +A_BK_B
 \bigl(|\varepsilon|Q_B^*+R_B\bigr)^2\\
&\le
\frac{R_B}{2}+4A_BK_BR_B^2\\
&\le R_B.
\end{aligned}
$$
Thus $\mathcal T_B$ maps the fixed ball
$\mathcal B_{R_B}$ into itself.

Next, let $z_1,z_2\in\mathcal B_{R_B}$, and set
$$
q_j=\varepsilon p\rho_B+z_j,
\qquad j=1,2.
$$
Then
$$
\|q_j\|_{Y_B}\le2R_B<r_0.
$$
Using the Lipschitz estimate for $N_B$, we obtain $$
\begin{aligned}
\|\mathcal T_B z_1-\mathcal T_B z_2\|_{Y_B}
&\leq
A_BK_B\bigl(\|q_1\|_{Y_B}+\|q_2\|_{Y_B}\bigr)
\|z_1-z_2\|_{Y_B} \\
&\leq
4A_BK_BR_B\|z_1-z_2\|_{Y_B} \\
&\leq
\frac12\|z_1-z_2\|_{Y_B}.
\end{aligned}
$$
Hence $\mathcal T_B$ is a contraction on $\mathcal B_{R_B}$.
By the contraction mapping theorem, it has a unique fixed point
$$
z_B^\varepsilon\in\mathcal B_{R_B}.
$$
Therefore,
$$
P_B^\varepsilon(t)
=
V_B+\varepsilon p(t)\rho_B+z_B^\varepsilon(t)
$$
is a $T$-periodic lifted mild solution.
We next recover an $O(|\varepsilon|)$ estimate for the fixed point.
From its fixed-point equation,
$$
\|z_B^\varepsilon\|_{Y_B}
\le
A_B|\varepsilon|F_B^*
+A_BK_B
\left(
|\varepsilon|Q_B^*
+\|z_B^\varepsilon\|_{Y_B}
\right)^2.
$$
Using
$$
(a+b)^2\le2a^2+2b^2
$$
and
$$
\|z_B^\varepsilon\|_{Y_B}^2
\le
R_B\|z_B^\varepsilon\|_{Y_B},
$$
we obtain
$$
\begin{aligned}
\left(1-2A_BK_BR_B\right)
\|z_B^\varepsilon\|_{Y_B}
&\le
A_B|\varepsilon|F_B^*
+2A_BK_B\varepsilon^2(Q_B^*)^2.
\end{aligned}
$$
Since
$$
2A_BK_BR_B\le\frac14,
$$
after decreasing $\varepsilon_B$, if necessary, so that
$\varepsilon_B\le1$, there exists a constant $C_B'>0$,
independent of $\varepsilon$, such that
$$
\|z_B^\varepsilon\|_{Y_B}
\le C_B'|\varepsilon|.
$$
Consequently,
$$
\begin{aligned}
\|P_B^\varepsilon-V_B\|_{Y_B}
&\le
|\varepsilon|Q_B^*
+\|z_B^\varepsilon\|_{Y_B}\\
&\le
(Q_B^*+C_B')|\varepsilon|.
\end{aligned}
$$
Thus the asserted first-order estimate holds with

$$
C_B:=Q_B^*+C_B'.
$$

It remains to prove uniqueness in a fixed neighbourhood. Set
$$
r_B:=\frac{R_B}{2},
$$
and decrease $\varepsilon_B$ further so that
$$
C_B|\varepsilon|\le r_B,
\qquad
|\varepsilon|Q_B^*\le\frac{R_B}{2}
$$
whenever $|\varepsilon|<\varepsilon_B$.

Let $\widetilde P_B^\varepsilon$ be any lifted
$T$-periodic mild solution satisfying
$$
\|\widetilde P_B^\varepsilon-V_B\|_{Y_B}\le r_B,
$$
and define
$$
\widetilde z_B
 :=\widetilde P_B^\varepsilon
   -V_B-\varepsilon p\rho_B.
$$
Then
$$
\|\widetilde z_B\|_{Y_B}
\le
r_B+|\varepsilon|Q_B^*
\le R_B.
$$
Hence $\widetilde z_B\in\mathcal B_{R_B}$.

For every integer $n\ge1$, periodicity and the variation-of-constants
formula give
$$
\begin{aligned}
\widetilde z_B(t)
&=S_B(nT)\widetilde z_B(t-nT)\\
&\quad+
\int_{t-nT}^{t}
S_B(t-s)
\left[
\varepsilon F_B(s)
+N_B\bigl(
\varepsilon p(s)\rho_B+\widetilde z_B(s)
\bigr)
\right]\,ds.
\end{aligned}
$$
Since $\widetilde z_B$ is bounded and $S_B(t)$ is exponentially
stable,
$$
S_B(nT)\widetilde z_B(t-nT)\longrightarrow0
\qquad\text{as }n\to\infty.
$$
Therefore,
$$
\widetilde z_B=\mathcal T_B\widetilde z_B.
$$
Thus $\widetilde z_B$ is a fixed point of $\mathcal T_B$ in
$\mathcal B_{R_B}$. By uniqueness of the fixed point in this ball,
$$
\widetilde z_B=z_B^\varepsilon.
$$
Therefore,
$$
\widetilde P_B^\varepsilon=P_B^\varepsilon.
$$
This proves uniqueness in the fixed neighbourhood and completes the
proof.
\begin{proposition}[Linear response and quadratic remainder]
\label{prop:periodic-expansion}
Under the hypotheses of
Theorem~\ref{thm:periodic-existence}, let $\Phi_B$ be the unique
$T$-periodic lifted mild solution of
$$
(\Phi_B)_t
=
(\Phi_B)_{xx}+f'(V_B)\Phi_B,
\qquad x>0,
$$
with boundary condition
$$
B\Phi_B(t,\cdot)=p(t).
$$
Then
$$
\sup_{t\in[0,T]}
\left\|
P_B^\varepsilon(t)-V_B-\varepsilon\Phi_B(t)
\right\|_X
\le C_B\varepsilon^2,
$$
after enlarging $C_B$ if necessary.
\end{proposition}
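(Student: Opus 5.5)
We will use the same lifting as in the proof of Theorem~\ref{thm:periodic-existence}. This reduces the linear periodic problem for $\Phi_B$ to a fixed-point equation with no nonlinearity. We then subtract it from the fixed-point equation for $z_B^\varepsilon$.

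\emph{Step 1: the linear response.} Write $\Phi_B(t)=p(t)\rho_B+\zeta_B(t)$, where $\zeta_B\in Y_B=C_{\mathrm{per}}([0,T];X)$. Then $B\zeta_B=0$, and at the mild level
$$
(\zeta_B)_t=L_B\zeta_B+F_B(t,\cdot),
$$
with $F_B=p\,\mathscr L_B\rho_B-p'\rho_B$ exactly as before. Exponential stability of $S_B$ gives existence, and we define
$$
\zeta_B(t):=\int_{-\infty}^t S_B(t-s)F_B(s)\,ds .
$$
It is $T$-periodic and satisfies $\|\zeta_B\|_{Y_B}\le A_BF_B^*$. For uniqueness, let $\zeta$ be any periodic lifted mild solution. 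The variation-of-constants formula over $[t-nT,t]$ gives $\zeta(t)=S_B(nT)\zeta(t-nT)+\int_{t-nT}^t S_B(t-s)F_B(s)\,ds$, exactly as in the uniqueness part of the theorem. Letting $n\to\infty$ shows $\zeta=\zeta_B$. This makes $\Phi_B$ well defined.

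\emph{Step 2: the remainder identity.} By construction,
$$
P_B^\varepsilon-V_B-\varepsilon\Phi_B=z_B^\varepsilon-\varepsilon\zeta_B .
$$
The fixed-point equation for $z_B^\varepsilon$ and linearity of the integral give
$$
z_B^\varepsilon(t)-\varepsilon\zeta_B(t)=\int_{-\infty}^t S_B(t-s)\,N_B\bigl(\varepsilon p(s)\rho_B+z_B^\varepsilon(s)\bigr)\,ds .
$$
So the whole remainder is the nonlinear term alone.

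\emph{Step 3: the quadratic bound.} By Theorem~\ref{thm:periodic-existence}, for $|\varepsilon|<\varepsilon_B$,
$$
\|\varepsilon p\rho_B+z_B^\varepsilon\|_{Y_B}=\|P_B^\varepsilon-V_B\|_{Y_B}\le C_B|\varepsilon|\le r_B<r_0 .
$$
The quadratic estimate $\|N_B(q)\|_X\le K_B\|q\|_X^2$ therefore applies. Combined with $\|S_B(t)\|\le M_Be^{-\gamma_Bt}$, it yields
$$
\sup_{t\in[0,T]}\|z_B^\varepsilon(t)-\varepsilon\zeta_B(t)\|_X\le A_BK_BC_B^2\varepsilon^2 .
$$
Replacing $C_B$ by $\max\{C_B,A_BK_BC_B^2\}$ gives the claim.

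\emph{Main obstacle.} The only step needing care is Step 1. There we must make precise what a ``lifted mild solution'' of the linear problem means and check that the lifting is consistent. The same $\rho_B$ must be used for both $P_B^\varepsilon$ and $\Phi_B$, so that the lifting terms $\varepsilon p\rho_B$ cancel exactly. The decomposition into $p\rho_B$ plus an $X$-valued part is unique, so we also need that $\Phi_B$ does not depend on the choice of $\rho_B$. To check this, take two liftings $\rho_B,\tilde\rho_B$. Their difference lies in $D(L_B)$ because it is $C_c^2$ and satisfies $B(\rho_B-\tilde\rho_B)=0$. For such elements the mild and strong formulations agree, so the two constructions give the same $\Phi_B$. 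The remaining steps are direct estimates already used in the proof of Theorem~\ref{thm:periodic-existence}.
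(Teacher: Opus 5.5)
Your proposal is correct and follows the paper's proof essentially step for step. It uses the same Duhamel formula $\int_{-\infty}^t S_B(t-s)F_B(s)\,ds$ for the homogeneous part of $\Phi_B$, the same identity showing that the remainder equals $\int_{-\infty}^t S_B(t-s)N_B(P_B^\varepsilon(s)-V_B)\,ds$, and the same bound $M_BK_BC_B^2\varepsilon^2/\gamma_B$ obtained from the first-order estimate of Theorem~\ref{thm:periodic-existence}. Your extra checks supply details the paper only asserts: uniqueness of the periodic linear response via the $S_B(nT)$ argument, and independence of $\Phi_B$ from the choice of $\rho_B$, which holds because $\int_{-\infty}^t S_B(t-s)\bigl[p(s)L_Bh-p'(s)h\bigr]\,ds=-p(t)h$ for $h\in D(L_B)$.
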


\textbf{Proof:}
Set
$$
F_B(t)
=
p(t) \mathscr L_B\rho_B-p'(t)\rho_B
$$
and define
$$
\eta_B(t)
=
\int_{-\infty}^t
S_B(t-s)F_B(s)\,ds.
$$
Exponential stability implies that this integral is well defined and
that $\eta_B$ is the unique $T$-periodic mild solution of the
homogeneous linearized equation with forcing $F_B$.

Define
$$
\Phi_B(t)
=
p(t)\rho_B+\eta_B(t).
$$
Then $\Phi_B$ is the unique $T$-periodic lifted mild solution of
the stated linearized boundary problem.

Let
$$
R_B^\varepsilon(t)
:=
P_B^\varepsilon(t)-V_B-\varepsilon\Phi_B(t).
$$
Writing
$$
N_B(q)
:=
f(V_B+q)-f(V_B)-f'(V_B)q,
$$
and subtracting the corresponding mild integral formulas gives

$$
R_B^\varepsilon(t)
=
\int_{-\infty}^t
S_B(t-s)
N_B\!\left(
\varepsilon\Phi_B(s)+R_B^\varepsilon(s)
\right)\,ds.
$$

Since $f\in C^2$, there exist $K_B>0$ and $r_0>0$ such that
$$
\|N_B(q)\|_X
\le K_B\|q\|_X^2
$$
whenever \(\|q\|_X\le r_0\). By
Theorem~\ref{thm:periodic-existence},
$$
\sup_{t\in[0,T]}
\left\|
\varepsilon\Phi_B(t)+R_B^\varepsilon(t)
\right\|_X
=
\sup_{t\in[0,T]}
\|P_B^\varepsilon(t)-V_B\|_X
\le C_B|\varepsilon|.
$$
Therefore, for sufficiently small $$|\varepsilon|$$
$$
\begin{aligned}
\sup_{t\in[0,T]}
\|R_B^\varepsilon(t)\|_X
&\le
\sup_{t\in[0,T]}
\int_{-\infty}^t
M_Be^{-\gamma_B(t-s)}
\left\|
N_B\!\left(
\varepsilon\Phi_B(s)+R_B^\varepsilon(s)
\right)
\right\|_X\,ds\\
&\le
\frac{M_BK_B}{\gamma_B}
\left(
\sup_{t\in[0,T]}
\|P_B^\varepsilon(t)-V_B\|_X
\right)^2\\
&\le
\frac{M_BK_BC_B^2}{\gamma_B}\,
\varepsilon^2.
\end{aligned}
$$
Enlarging \(C_B\) proves the result.$\blacksquare$

\begin{corollary}[Logistic Neumann and Robin boundary forcing]
\label{cor:logistic-periodic-forcing}
Let
$$
f(u)=u(1-u),
\qquad
b_0=0,
\qquad
B\in\{B_N,B_R\},
$$
and let $V_B$ be the corresponding Neumann or Robin stationary
profile. Then the exponential-stability hypothesis of
Theorem~\ref{thm:periodic-existence} follows from
Proposition~\ref{prop:linearized-stability}. Consequently, the
conclusions of Theorem~\ref{thm:periodic-existence} and
Proposition~\ref{prop:periodic-expansion} hold for the logistic
Neumann and Robin problems.
\end{corollary}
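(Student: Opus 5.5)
The plan is to check, one at a time, the hypotheses of Theorem~\ref{thm:periodic-existence} for $f(u)=u(1-u)$, $b_0=0$ and $B\in\{B_N,B_R\}$. Once they hold, both Theorem~\ref{thm:periodic-existence} and Proposition~\ref{prop:periodic-expansion} apply verbatim.

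\textbf{Steady state and regularity.} First I record the steady states. By Theorem~\ref{thm:stationary-profiles}, $V_N\equiv1$ satisfies $V_N''+f(V_N)=0$, $B_NV_N=0$ and $V_N\to1$. For the Robin case, the Remark on existence and uniqueness of the stationary profiles gives a positive increasing profile $V_R$ with $B_RV_R=V_R'(0)-V_R(0)=0$ and $V_R\to1$. So $b_0=0$ and $V_\infty=1$ in both cases. Moreover $\overline{V_B([0,\infty))}\subset[V_B(0),1]\subset[0,1]$, so I can take $I_B=(-1,2)$, say. On this interval $f$ is a polynomial, hence $f\in C^2(I_B)$ (indeed $C^\infty$).

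\textbf{Linearized operator.} Next I identify the linearized expression. Since $f'(V_B)=1-2V_B$, the expression $\mathscr L_B\phi=\phi''+f'(V_B)\phi$ coincides with the operator $L_B$ of Proposition~\ref{prop:linearized-stability}. The two domains $D(L_B)$ are literally the same set: functions in $X=C_0([0,\infty))$ with locally absolutely continuous $\phi,\phi'$, with $\phi''\in X$, and with $B\phi=0$.

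\textbf{Exponential stability.} Proposition~\ref{prop:linearized-stability} then states that $L_B$ generates a $C_0$-semigroup with
\[
\|S_B(t)\|_{\mathcal L(X)}\le \frac1{m_B}e^{-m_Bt}.
\]
Here $m_N=1$ and $m_R=V_R(0)\in(0,1)$. So I set $\gamma_B=m_B>0$ and $M_B=1/m_B$. Since $m_B\le1$, we get $M_B\ge1$, as the theorem requires.

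\textbf{Lifting function.} The remaining hypothesis is the existence of $\rho_B\in C_c^2([0,\infty))$ with $B\rho_B=1$. For the Neumann case I take $\rho_N(x)=x\chi(x)$, where $\chi$ is a smooth cutoff equal to $1$ near $0$. For the Robin case I take $\rho_R(x)=-\chi(x)$, which gives $\rho_R'(0)-\rho_R(0)=1$.

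\textbf{Conclusion and main obstacle.} With all hypotheses verified, Theorem~\ref{thm:periodic-existence} yields the periodic lifted mild solution, its local uniqueness and the $O(|\varepsilon|)$ bound. Proposition~\ref{prop:periodic-expansion} then yields the $O(\varepsilon^2)$ remainder. The only point needing care is that the steady state $V_B$ is not in $X$ while the perturbation must be. This causes no difficulty, because the semigroup acts only on $z_B$. The nonlinear remainder is also harmless here: $N_B(q)=-q^2$ exactly, so it maps $X$ into $X$ with $K_B=1$ and no restriction on $r_0$. I therefore expect no genuine obstacle; the step most worth double-checking is that the domains of the two operators $L_B$ agree.
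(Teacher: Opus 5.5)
Your proposal is correct and follows the same route as the paper, whose proof consists solely of reading off $M_B=1/m_B$ and $\gamma_B=m_B>0$ from Proposition~\ref{prop:linearized-stability}. Your additional checks are all accurate and make explicit what the paper leaves implicit: the identification of $\mathscr L_B$ and its domain with $L_B$, the bound $M_B\ge1$ because $m_B\le1$, the explicit liftings $\rho_N=x\chi$ and $\rho_R=-\chi$, and the exact identity $N_B(q)=-q^2$ giving $K_B=1$.
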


\textbf{Proof:}
Proposition~\ref{prop:linearized-stability} gives
$$
\|S_B(t)\|_{\mathcal L(X)}
\le
\frac{1}{m_B}e^{-m_Bt},
\qquad t\ge0.
$$
Thus the abstract semigroup hypothesis holds with
$$
M_B=\frac{1}{m_B},
\qquad
\gamma_B=m_B>0.
$$

\begin{remark}
The Dirichlet problem with nonhomogeneous time-periodic boundary forcing requires a slightly
different semigroup formulation, since the homogeneous Dirichlet realization naturally acts on
the closed subspace $\{\phi\in C_0([0,\infty)):\phi(0)=0\}$, rather than on the full space
$C_0([0,\infty))$. For this reason, Theorem 3.3 is stated only for the Neumann and Robin
boundary operators.
\end{remark}
\subsection{Sharp profiles and nonlinear stability}
\label{subsec:sharp-profiles-nonlinear-stability}
\begin{corollary}[Explicit profiles and sharp spatial decay]
\label{cor:explicit-sharp-profiles}
Under the hypotheses of
Theorem~\ref{thm:stationary-profiles}, set
$$
\sigma_D
 :=2\operatorname{arctanh}\!\left(\frac{1}{\sqrt{3}}\right)
 =\log(2+\sqrt{3}).
$$
Then
$$
V_D(x)
 =1-\frac{3}{2}\sech^2\!\left(\frac{x+\sigma_D}{2}\right),
\qquad
V_N(x)\equiv1.
$$

Let $q_R\in(1/\sqrt{3},1)$ be the unique root of
$$
3q^3+3q^2-3q-1=0,
$$
and set
$$
\sigma_R:=2\operatorname{arctanh}(q_R).
$$
Then
$$
V_R(x)
 =1-\frac{3}{2}\sech^2\!\left(\frac{x+\sigma_R}{2}\right).
$$

Moreover,
$$
0\le V_D(x)<V_R(x)<V_N(x)=1,
\qquad x\ge0,
$$
and
$$
\begin{aligned}
|V_D(x)-V_R(x)|+|V_N(x)-V_R(x)|
 &=1-V_D(x)\\
 &=\frac{3}{2}
 \sech^2\!\left(\frac{x+\sigma_D}{2}\right)\\
 &\le6e^{-(x+\sigma_D)}.
\end{aligned}
$$
In particular, the spatial exponent $1$ is sharp. More precisely,
for $B\in\{D,R\}$,
$$
1-V_B(x)
 =6e^{-(x+\sigma_B)}+O(e^{-2x})
\qquad\text{as }x\to\infty.
$$
\end{corollary}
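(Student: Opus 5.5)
The plan is to solve the stationary equation explicitly along the increasing branch found in the proof of Theorem~\ref{thm:stationary-profiles}, and then read off every claim from the closed formulas. The first step is a direct verification. Put $z=1-V$, so that, as in that proof, $z''=z-z^2$. I will check by differentiation that
$$
z_\sigma(x)=\frac32\sech^2\!\left(\frac{x+\sigma}{2}\right)
$$
solves this equation for every $\sigma\in\mathbb R$; this is the classical one-soliton of $z''=z-z^2$. For $\sigma>0$ the argument $(x+\sigma)/2$ is positive on $[0,\infty)$, so $z_\sigma$ is strictly decreasing from $z_\sigma(0)<\tfrac32$ to $0$. Hence $V_\sigma:=1-z_\sigma$ increases to $1$, satisfies $V_\sigma'\to0$, and lies on the branch $V'=(1-V)\sqrt{(2V+1)/3}$.

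The second step identifies the given profiles with members of this family. The proof of Theorem~\ref{thm:stationary-profiles} shows that $V_D$ and $V_R$ solve the first-order equation $V'=(1-V)\sqrt{(2V+1)/3}$. Its right-hand side is locally Lipschitz on $V\in(-\tfrac12,1)$, and the relevant values $V(0)$ lie in $[0,1)$. So each profile is determined by $V(0)$, and it suffices to match initial values. Write $q=\tanh(\sigma/2)$, so that $\sech^2(\sigma/2)=1-q^2$ and $V_\sigma'(0)=\tfrac32 q(1-q^2)$, using $\frac{d}{dx}\sech^2(y/2)=-\sech^2(y/2)\tanh(y/2)$.
\begin{itemize}
\item \emph{Dirichlet.} The condition $V_\sigma(0)=0$ means $\sech^2(\sigma/2)=\tfrac23$, that is $q^2=\tfrac13$. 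With $q>0$ this gives $\sigma_D=2\operatorname{arctanh}(1/\sqrt3)=\log\frac{\sqrt3+1}{\sqrt3-1}=\log(2+\sqrt3)$.
\item \emph{Robin.} The condition $V_\sigma'(0)=V_\sigma(0)$ reads $\tfrac32q(1-q^2)=1-\tfrac32(1-q^2)$, which simplifies to $3q^3+3q^2-3q-1=0$. Call the left side $h(q)$. Then $h(1/\sqrt3)=1/\sqrt3-\sqrt3<0$ and $h(1)=2>0$, so a root exists in $(1/\sqrt3,1)$.
\end{itemize}
For uniqueness of the Robin root I note that $V(0)=1-\tfrac32(1-q^2)$ is a strictly increasing bijection from $q\in(1/\sqrt3,1)$ onto $a\in(0,1)$. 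By the existence–uniqueness remark, $a_R$ is the unique root of $g(a)=2a^3-6a^2+1$ in $(0,1)$. Therefore $q_R$ is unique, and $V_R=V_{\sigma_R}$. Finally, $V_N\equiv1$ is already part of Theorem~\ref{thm:stationary-profiles}.

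The third step gives the ordering and the decay estimates. Since $q_R>1/\sqrt3$ and $\operatorname{arctanh}$ is increasing, we have $\sigma_R>\sigma_D>0$. Because $\sech^2$ is decreasing on $(0,\infty)$, this yields $z_{\sigma_R}(x)<z_{\sigma_D}(x)$, hence $0\le V_D<V_R<1=V_N$. The ordering makes both absolute values explicit, and the sum telescopes: $(V_R-V_D)+(1-V_R)=1-V_D$. For the bounds I use
$$
\sech^2 y=\frac{4e^{-2y}}{(1+e^{-2y})^2}.
$$
This gives $\sech^2y\le 4e^{-2y}$, so with $y=(x+\sigma)/2$ we get $1-V_B(x)\le 6e^{-(x+\sigma_B)}$. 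It also gives $\sech^2 y=4e^{-2y}+O(e^{-4y})$, which is the expansion $1-V_B(x)=6e^{-(x+\sigma_B)}+O(e^{-2x})$. Sharpness of the exponent $1$ follows because the leading coefficient $6e^{-\sigma_B}$ is nonzero, so no rate $e^{-(1+\delta)x}$ is possible.

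I expect the main obstacle to be the identification step, namely justifying that the hypotheses force the profiles onto the soliton family with exactly these shifts. The soliton verification is a routine hyperbolic-function computation. The delicate points are, first, the local Lipschitz property of the first-order ODE near $V(0)$, which holds since $V(0)<1$ and $2V+1>0$; and second, the uniqueness of the Robin root, which I will obtain through the monotone change of variables $q\mapsto a$ rather than by analysing the cubic $h$ directly.
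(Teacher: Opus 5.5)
Your proof is correct. The ordering via $\sigma_R>\sigma_D$, the telescoping identity, the bound from $\sech^2y=4e^{-2y}/(1+e^{-2y})^2$, and the sharpness argument are the same as in the paper. You reach the explicit profiles by a different route, though.

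\emph{How the profiles are obtained.} The paper derives the formula. On the increasing branch it substitutes $q=\sqrt{(2V+1)/3}$, which turns $V'=(1-V)\sqrt{(2V+1)/3}$ into $q'=\tfrac12(1-q^2)$. Integrating with $q\in(0,1)$ gives $q=\tanh((x+\sigma)/2)$. So every admissible profile is seen to be a shifted $\sech^2$ without knowing the answer in advance, and the shift is read off from $q(0)$. You instead posit the one-soliton of $z''=z-z^2$, check that it lies on the branch, and identify $V_D$ and $V_R$ by uniqueness for the autonomous first-order initial-value problem after matching $V(0)$. This is equally rigorous: the solutions stay in $[0,1)$, where the right-hand side is Lipschitz, and every $V(0)\in[0,1)$ is attained by some $\sigma>0$. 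In effect you replace the integration by a verification plus a uniqueness theorem.

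\emph{Uniqueness of the Robin root.} The paper shows directly that $G(q)=3q(1-q^2)-(3q^2-1)$ satisfies $G'(q)=3-9q^2-6q<0$ on $(1/\sqrt3,1)$, which is self-contained. You transfer uniqueness from $g(a)=2a^3-6a^2+1$ through the increasing bijection $a=(3q^2-1)/2$. That transfer works, but it uses a step you leave implicit: every root $q$ of $h$ in $(1/\sqrt3,1)$ must produce a root of $g$. This holds because $V_\sigma$ satisfies the first integral. Then $V_\sigma'(0)=V_\sigma(0)=a$ forces $a^2=(1-a)^2(2a+1)/3$, i.e.\ $g(a)=0$. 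You should state this line explicitly; the direct derivative check is the shorter way to get uniqueness.
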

\textbf{Proof:}
The first integral obtained in the proof of
Theorem~\ref{thm:stationary-profiles} is
$$
\frac12(V')^2+\frac12V^2-\frac13V^3=\frac16.
$$
Equivalently,
$$
(V')^2=\frac{(1-V)^2(2V+1)}{3}.
$$
A nonconstant profile satisfying $0\le V<1$ and converging to $1$
lies on the increasing branch
$$
V'=(1-V)\sqrt{\frac{2V+1}{3}}.
$$

Set
$$
q:=\sqrt{\frac{2V+1}{3}}.
$$
Then
$$
V=\frac{3q^2-1}{2}.
$$
Since
$$
V'=(1-V)q
$$
and
$$
1-V=\frac32(1-q^2),
$$
differentiation gives
$$
q'=\frac12(1-q^2).
$$
Consequently,
$$
q(x)=\tanh\!\left(\frac{x+\sigma}{2}\right)
$$
for some $\sigma>0$, and therefore
$$
V(x)
 =1-\frac32\sech^2\!\left(\frac{x+\sigma}{2}\right).
$$

For the Dirichlet profile, the condition $V_D(0)=0$ gives
$$
\tanh\!\left(\frac{\sigma_D}{2}\right)
 =\frac1{\sqrt3}.
$$
Hence
$$
\sigma_D
 =2\operatorname{arctanh}\!\left(\frac1{\sqrt3}\right)
 =\log(2+\sqrt3).
$$

For the Robin profile, write
$$
q_R=\tanh\!\left(\frac{\sigma_R}{2}\right).
$$
The boundary condition $V_R'(0)=V_R(0)$ becomes
$$
3q_R(1-q_R^2)=3q_R^2-1,
$$
which is equivalent to
$$
3q_R^3+3q_R^2-3q_R-1=0.
$$

Define
$$
G(q):=3q(1-q^2)-(3q^2-1).
$$
For $q\in(1/\sqrt3,1)$,
$$
G'(q)=3-9q^2-6q<0.
$$
Moreover,
$$
G(1/\sqrt3)>0,
\qquad
G(1)<0.
$$
Thus $G$ has a unique zero
$$
q_R\in(1/\sqrt3,1).
$$
It follows that
$$
\sigma_R>\sigma_D.
$$
Since the profile is strictly increasing as a function of the shift
$\sigma$, we obtain
$$
V_D(x)<V_R(x)<1,
\qquad x\ge0.
$$

Using $V_N\equiv1$, the ordering gives
$$
\begin{aligned}
|V_D-V_R|+|V_N-V_R|
&=(V_R-V_D)+(1-V_R)\\
&=1-V_D.
\end{aligned}
$$
Furthermore,
$$
1-V_D(x)
 =\frac32\sech^2\!\left(\frac{x+\sigma_D}{2}\right).
$$
Since
$$
\sech^2 y
 =\frac{4e^{-2y}}{(1+e^{-2y})^2},
$$
we have
$$
1-V_B(x)
 =\frac{6e^{-(x+\sigma_B)}}
        {(1+e^{-(x+\sigma_B)})^2}
 \le6e^{-(x+\sigma_B)}
$$
for $B\in\{D,R\}$. Finally,
$$
1-V_B(x)
 =6e^{-(x+\sigma_B)}+O(e^{-2x})
\qquad\text{as }x\to\infty.
$$
This proves the result.
\begin{proposition}[Dependence on the homogeneous Robin coefficient]
\label{prop:robin-coefficient}
For every $\kappa>0$, the stationary problem
$$
V_\kappa''+V_\kappa(1-V_\kappa)=0,
\qquad x>0,
$$
with
$$
V_\kappa'(0)=\kappa V_\kappa(0),
\qquad
V_\kappa(x)\longrightarrow1
\quad\text{as }x\to\infty,
$$
has a unique solution satisfying
$$
0<V_\kappa(x)<1,
\qquad x\ge0.
$$
It is given explicitly by
$$
V_\kappa(x)
 =1-\frac32
 \sech^2\!\left(\frac{x+\sigma_\kappa}{2}\right),
\qquad
\sigma_\kappa
 =2\operatorname{arctanh}(q_\kappa),
$$
where $q_\kappa\in(1/\sqrt3,1)$ is the unique solution of
$$
3q_\kappa(1-q_\kappa^2)
 =\kappa(3q_\kappa^2-1).
$$

For every fixed $x\ge0$, the map
$$
\kappa\longmapsto V_\kappa(x)
$$
is strictly decreasing. Furthermore,
$$
V_\kappa\longrightarrow1
\quad\text{locally uniformly on }[0,\infty)
\quad\text{as }\kappa\downarrow0,
$$
whereas
$$
V_\kappa\longrightarrow V_D
\quad\text{locally uniformly on }[0,\infty)
\quad\text{as }\kappa\to\infty.
$$
\textbf{Proof:}
By the calculation in the proof of
Corollary~\ref{cor:explicit-sharp-profiles}, every stationary profile
with values in $(0,1)$ and limit $1$ has the form
$$
V(x)
 =1-\frac32\sech^2\!\left(\frac{x+\sigma}{2}\right)
$$
for some $\sigma>0$. Set
$$
q=\tanh\!\left(\frac{\sigma}{2}\right).
$$
Then
$$
V(0)=\frac{3q^2-1}{2},
\qquad
V'(0)=\frac32q(1-q^2).
$$
Therefore, the boundary condition
$$
V'(0)=\kappa V(0)
$$
is equivalent to
$$
3q(1-q^2)=\kappa(3q^2-1).
$$

Define
$$
F(q,\kappa)
 :=3q(1-q^2)-\kappa(3q^2-1).
$$
For \(q\in(1/\sqrt3,1)\),
$$
F_q(q,\kappa)
 =3-9q^2-6\kappa q<0.
$$
Moreover,
$$
F(1/\sqrt3,\kappa)>0,
\qquad
F(1,\kappa)=-2\kappa<0.
$$
Thus, for every $\kappa>0$, there exists a unique
$$
q_\kappa\in(1/\sqrt3,1)
$$
such that
$$
F(q_\kappa,\kappa)=0.
$$
The corresponding explicit profile satisfies the differential
equation, the Robin boundary condition, and the far-field condition.
The preceding representation also shows that every admissible profile
must arise from such a root. This proves existence and uniqueness.

Implicit differentiation of
$$
F(q_\kappa,\kappa)=0
$$
gives
$$
\frac{dq_\kappa}{d\kappa}
 =-\frac{3q_\kappa^2-1}
        {9q_\kappa^2+6\kappa q_\kappa-3}<0.
$$
Consequently,
$$
\frac{d\sigma_\kappa}{d\kappa}
 =\frac{2}{1-q_\kappa^2}
   \frac{dq_\kappa}{d\kappa}<0.
$$
For every fixed $x\ge0$, the function
$$
\sigma\longmapsto
1-\frac32\sech^2\!\left(\frac{x+\sigma}{2}\right)
$$
is strictly increasing. Hence
$$
\frac{\partial V_\kappa(x)}{\partial\kappa}<0.
$$

As $\kappa\downarrow0$, the defining equation implies
$$
q_\kappa\longrightarrow1.
$$
Therefore,
$$
\sigma_\kappa
 =2\operatorname{arctanh}(q_\kappa)
 \longrightarrow\infty,
$$
and consequently
$$
V_\kappa\longrightarrow1
$$
locally uniformly on $[0,\infty)$.

As $\kappa\to\infty$, division of the defining equation by
$\kappa$ gives
$$
\frac{3q_\kappa(1-q_\kappa^2)}{\kappa}
 =3q_\kappa^2-1.
$$
It follows that
$$
q_\kappa\longrightarrow\frac1{\sqrt3},
\qquad
\sigma_\kappa\longrightarrow\sigma_D.
$$
Therefore,
$$
V_\kappa\longrightarrow V_D
$$
locally uniformly on $[0,\infty)$.
\begin{theorem}[Nonlinear exponential attraction of the periodic response]
\label{thm:periodic-exponential-attraction}
Assume the hypotheses of Theorem~3.3, and let
$P_B^\varepsilon$ be the $T$-periodic lifted mild solution
constructed there. Fix
$$
0<\mu_B<\gamma_B.
$$
Then there exist constants
$$
\varepsilon_B^{\mathrm{att}}>0,
\qquad
\delta_B>0,
\qquad
C_B^{\mathrm{att}}>0
$$
such that the following holds.

Let
$$
|\varepsilon|<\varepsilon_B^{\mathrm{att}},
$$
and let $U_B^\varepsilon$ be the maximal lifted mild solution with
the same boundary forcing as $P_B^\varepsilon$. If
$$
d_0
 :=U_B^\varepsilon(0)-P_B^\varepsilon(0)\in X
$$
and
$$
\|d_0\|_X\le\delta_B,
$$
then $U_B^\varepsilon$ exists for every $t\ge0$, and
$$
\|U_B^\varepsilon(t)-P_B^\varepsilon(t)\|_X
 \le
C_B^{\mathrm{att}}e^{-\mu_Bt}\|d_0\|_X,
\qquad t\ge0.
$$
\end{theorem}
\end{proposition}
\textbf{Proof:}
Set
$$
d(t):=U_B^\varepsilon(t)-P_B^\varepsilon(t).
$$
Since the two solutions satisfy the same boundary forcing, their
difference satisfies the homogeneous boundary condition encoded by
$L_B$. Thus
$$
d_t=L_Bd+G_B^\varepsilon(t,d),
$$
where
$$
G_B^\varepsilon(t,d)
 :=f(P_B^\varepsilon(t)+d)-f(P_B^\varepsilon(t))
   -f'(V_B)d.
$$

By the first-order estimate in Theorem~3.3, there exists
$C_{P,B}>0$ such that
$$
\sup_{t\in[0,T]}
\|P_B^\varepsilon(t)-V_B\|_X
\le C_{P,B}|\varepsilon|.
$$
Choose a closed neighbourhood of the range of $V_B$ on which
$f''$ is bounded, and set
$$
H_B:=\sup |f''|
$$
on this neighbourhood.

For sufficiently small $r>0$ and $|\varepsilon|$, the arguments
below remain in this neighbourhood whenever
$$
\|d\|_X\le r.
$$
Writing
$$
\begin{aligned}
G_B^\varepsilon(t,d)
&=
f(P_B^\varepsilon(t)+d)-f(P_B^\varepsilon(t))
-f'(P_B^\varepsilon(t))d\\
&\quad+
\bigl(
f'(P_B^\varepsilon(t))-f'(V_B)
\bigr)d,
\end{aligned}
$$
the mean-value theorem gives
$$
\|G_B^\varepsilon(t,d)\|_X
\le
H_B
\left(
C_{P,B}|\varepsilon|+\|d\|_X
\right)
\|d\|_X.
$$

Fix $r>0$ sufficiently small, and decrease
$\varepsilon_B^{\mathrm{att}}$, if necessary, so that
$$
\varepsilon_B^{\mathrm{att}}\le\varepsilon_B
$$
and
$$
\vartheta_B
 :=
\frac{
M_BH_B
\left(
C_{P,B}|\varepsilon|+r
\right)
}{
\gamma_B-\mu_B
}
\le\frac12
$$
whenever
$$
|\varepsilon|<\varepsilon_B^{\mathrm{att}}.
$$

On every time interval on which
$$
\|d(t)\|_X\le r,
$$
the variation-of-constants formula gives
$$
d(t)
 =S_B(t)d_0
 +\int_0^t
 S_B(t-s)G_B^\varepsilon(s,d(s))\,ds.
$$
Define
$$
Z(t)
 :=\sup_{0\le s\le t}
 e^{\mu_Bs}\|d(s)\|_X.
$$
Using
$$
\|S_B(t)\|_{\mathcal L(X)}
\le M_Be^{-\gamma_Bt},
$$
we obtain
$$
\begin{aligned}
e^{\mu_Bt}\|d(t)\|_X
&\le
M_B\|d_0\|_X\\
&\quad+
M_BH_B
\left(
C_{P,B}|\varepsilon|+r
\right)
\int_0^t
e^{-(\gamma_B-\mu_B)(t-s)}
e^{\mu_Bs}\|d(s)\|_X\,ds.
\end{aligned}
$$
Consequently,
$$
Z(t)
\le
M_B\|d_0\|_X+\vartheta_BZ(t)
\le
M_B\|d_0\|_X+\frac12Z(t).
$$
Therefore,
$$
Z(t)\le2M_B\|d_0\|_X.
$$

Choose
$$
0<\delta_B\le\frac{r}{4M_B}.
$$
If
$$
\|d_0\|_X\le\delta_B,
$$
then
$$
\|d(t)\|_X
\le
2M_Be^{-\mu_Bt}\|d_0\|_X
\le\frac r2.
$$
This strict estimate closes the continuation argument. Hence the
solution remains in the chosen neighbourhood and extends to every
$t\ge0$. Moreover,
$$
\|U_B^\varepsilon(t)-P_B^\varepsilon(t)\|_X
\le
2M_Be^{-\mu_Bt}
\|U_B^\varepsilon(0)-P_B^\varepsilon(0)\|_X.
$$
Thus the conclusion holds with
$$
C_B^{\mathrm{att}}=2M_B.
$$
\begin{corollary}[Quantitative autonomous Neumann--Robin comparison]
\label{cor:quantitative-autonomous-NR}
Let
$$
f(u)=u(1-u).
$$
Let $U_N$ and $U_R$ be solutions of the autonomous Neumann and
Robin problems, respectively, and assume
$$
U_N(0)-1\in X,
\qquad
U_R(0)-V_R\in X.
$$
There exist constants
$$
\delta>0,
\qquad
C>0,
\qquad
\mu>0
$$
such that, if
$$
\|U_N(0)-1\|_X
+\|U_R(0)-V_R\|_X
\le\delta,
$$
then
$$
\begin{aligned}
&\|U_N(t)-U_R(t)-(1-V_R)\|_X\\
&\qquad\le
Ce^{-\mu t}
\left(
\|U_N(0)-1\|_X
+\|U_R(0)-V_R\|_X
\right),
\qquad t\ge0.
\end{aligned}
$$
In particular, this gives a genuine temporal decay rate for the
Neumann--Robin comparison error in the near-equilibrium regime.
\end{corollary}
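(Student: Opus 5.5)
The plan is to apply Theorem~\ref{thm:periodic-exponential-attraction} with zero forcing, once in the Neumann case and once in the Robin case, and then combine the two estimates with the triangle inequality.

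\textbf{Step 1: specialize the attraction theorem.} Take $\varepsilon=0$ and $b_0=0$. In this case the uniqueness part of Theorem~\ref{thm:periodic-existence} identifies the periodic response as $P_B^0=V_B$. Indeed, $V_B$ is a stationary, hence $T$-periodic, lifted mild solution with $z_B\equiv0$, and it lies in the fixed neighbourhood. The hypotheses of Theorem~\ref{thm:periodic-existence} hold for the logistic nonlinearity. We have $f\in C^2(\mathbb R)$, so $I_B=\mathbb R$ works. The profiles are $V_N\equiv1$ and $V_R$, both taken from Theorem~\ref{thm:stationary-profiles}. Exponential stability of $S_B$ comes from Corollary~\ref{cor:logistic-periodic-forcing}, with $M_B=1/m_B$ and $\gamma_B=m_B$.

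\textbf{Step 2: the Neumann and Robin decay estimates.} Fix any $\mu_B\in(0,m_B)$, for instance $\mu_B=m_B/2$. Theorem~\ref{thm:periodic-exponential-attraction} then supplies $\delta_B$ and $C_B^{\mathrm{att}}$. For $B=B_N$ we get: if $\|U_N(0)-1\|_X\le\delta_N$, then $U_N$ is global and
$$\|U_N(t)-1\|_X\le C_N^{\mathrm{att}}e^{-\mu_Nt}\|U_N(0)-1\|_X.$$
For $B=B_R$ we get the analogous estimate for $U_R(t)-V_R$.

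\textbf{Step 3: combine.} Set $\delta:=\min\{\delta_N,\delta_R\}$, $\mu:=\min\{\mu_N,\mu_R\}$ and $C:=\max\{C_N^{\mathrm{att}},C_R^{\mathrm{att}}\}$. If the sum of the two initial deviations is at most $\delta$, then each deviation individually is at most $\delta$, so both estimates of Step~2 apply. The identity
$$U_N-U_R-(1-V_R)=(U_N-1)-(U_R-V_R)$$
and the triangle inequality then give the claimed bound. The right-hand side is at most $Ce^{-\mu t}$ times the sum of the initial deviations. Note that the quantity $U_N(t)-U_R(t)-(1-V_R)$ lies in $X$, because each of its two pieces does. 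This holds even though $1-V_R$ on its own is only an element of $C_0$ by Corollary~\ref{cor:explicit-sharp-profiles}.

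\textbf{Main obstacle.} The only delicate point is interpreting ``solutions'' as the maximal lifted mild solutions to which Theorem~\ref{thm:periodic-exponential-attraction} applies, and checking the identification $P_B^0=V_B$. With $\varepsilon=0$, the fixed-point map $\mathcal T_B$ sends $z=0$ to $\int_{-\infty}^t S_B(t-s)N_B(0)\,ds=0$. By uniqueness in the ball $\mathcal B_{R_B}$, the fixed point is therefore $z_B^0=0$. Here the lifting term $\varepsilon p\rho_B$ vanishes, so no choice of $\rho_B$ matters.
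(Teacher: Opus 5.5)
Your proposal is correct and follows the paper's proof. You apply the nonlinear exponential attraction theorem with $\varepsilon=0$ separately to the Neumann and Robin problems, so that $P_N^0=V_N\equiv1$ and $P_R^0=V_R$, and combine the two decay estimates through $U_N-U_R-(1-V_R)=(U_N-1)-(U_R-V_R)$ with $\mu=\min\{\mu_N,\mu_R\}$, $C=\max\{C_N,C_R\}$ and $\delta$ below both admissible radii. Your check that $\mathcal T_B 0=0$, hence $z_B^0=0$, justifies the identification $P_B^0=V_B$, which the paper only asserts; your aside on $1-V_R$ is muddled, since $1-V_R\in C_0([0,\infty))=X$ (it is $U_N(t)$ and $U_R(t)$ individually that lie outside $X$), but this plays no role in the argument.
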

\textbf{Proof:}
For the logistic Neumann and Robin problems, the hypotheses of
Theorem~\ref{thm:periodic-exponential-attraction} are verified by
Proposition~3.1 and Corollary~3.2.

Apply Theorem~\ref{thm:periodic-exponential-attraction} with
$\varepsilon=0$ separately to the Neumann and Robin problems. In
this case,
$$
P_N^0=V_N\equiv1,
\qquad
P_R^0=V_R.
$$
Therefore, for sufficiently small initial perturbations, there exist
constants $C_N,C_R>0$ and $\mu_N,\mu_R>0$ such that
$$
\|U_N(t)-1\|_X
\le
C_Ne^{-\mu_Nt}\|U_N(0)-1\|_X
$$
and
$$
\|U_R(t)-V_R\|_X
\le
C_Re^{-\mu_Rt}\|U_R(0)-V_R\|_X.
$$

Since
$$
\begin{aligned}
U_N(t)-U_R(t)-(1-V_R)
&=(U_N(t)-1)-(U_R(t)-V_R),
\end{aligned}
$$
the triangle inequality gives
$$
\begin{aligned}
&\|U_N(t)-U_R(t)-(1-V_R)\|_X\\
&\quad\le
C_Ne^{-\mu_Nt}\|U_N(0)-1\|_X
+C_Re^{-\mu_Rt}\|U_R(0)-V_R\|_X.
\end{aligned}
$$
Taking
$$
\mu:=\min\{\mu_N,\mu_R\},
\qquad
C:=\max\{C_N,C_R\},
$$
and choosing $\delta>0$ smaller than the two admissible
perturbation radii yields the asserted estimate.
\section*{Declaration of AI-assisted tools}

During the preparation of this manuscript, the first author used ChatGPT as an auxiliary tool to assist with LaTeX formatting and with drafting and organizing some proof arguments based on the first author's ideas and prompts. The authors independently reviewed, revised, and checked the mathematical arguments, references, and final manuscript, and take full responsibility for the accuracy and integrity of the work.

\section*{Acknowledgements}

The first author would like to thank Professor Woldegebriel Assefa Woldegerima, whose MATH 4000 course introduced her to the Fisher-KPP equation and laid the groundwork for the general research direction of this paper. The authors would like to thank Professor Dmitry Pelinovsky for his careful reading of an earlier version of the manuscript and for his detailed and constructive comments, which helped clarify the analytical framework, sharpen several assumptions, and improve the overall presentation of the paper.

\end{document}